\newtheorem{cor}{Corollary}
\newtheorem{theo}{Theorem}
\newtheorem{lem}{Lemma}
\newtheorem{preuve*}{Proof}
\newcommand{\argmin}{\mathop{\mathrm{argmin}}}
\newcommand{\E}{\mathbb{E}}
\newcommand{\R}{\mathbb{R}}
\newcommand{\V}{\mathbb{V}}
\newcommand{\parenth}[1]{\left(#1\right)}
\newcommand{\Tr}{\mathrm{T}}
\newcommand{\bigO}[1]{\ensuremath{\mathop{}\mathopen{}O\mathopen{}\left(#1\right)}}
\newcommand{\dof}{\mathrm{dof}}
\DeclareMathOperator{\se}{SE}
\DeclareMathOperator{\rank}{rank}
\DeclareMathOperator{\mse}{Risk}
\DeclareMathOperator{\sure}{SURE}
\DeclareMathOperator{\supp}{supp}
\DeclareMathOperator{\sign}{sign}
\DeclareMathOperator{\divg}{div}
\DeclareMathOperator{\cov}{cov}
\renewcommand{\span}{\mathrm{span}}
\DeclareMathOperator{\ball}{Ball}
\DeclareMathOperator{\tr}{tr}
\date{}
\begin{document}
\title{\bf The degrees of freedom of the Lasso for general design matrix}
\author{C. Dossal$^{(1)}$ M. Kachour$^{(2)}$, M.J. Fadili$^{(2)}$, G. Peyr\'e$^{(3)}$ and C. Chesneau$^{(4)}$}
\maketitle

\noindent
\begin{tabular}{@{}c@{}@{}c@{}}
\begin{tabular}{>{\centering}p{0.48\textwidth}}
$(1)$ IMB, CNRS-Univ. Bordeaux 1\\
351 Cours de la Lib\'eration, F-33405 Talence, France\\
\url{Charles.Dossal@math.u-bordeaux1.fr}
\end{tabular} &
\begin{tabular}{>{\centering}p{0.48\textwidth}}
$(2)$ GREYC, CNRS-ENSICAEN-Univ. Caen\\
6 Bd du Mar\'echal Juin, 14050 Caen, France\\
\url{Jalal.Fadili}\url{@greyc.ensicaen.fr} \\
\url{Maher.Kachour@greyc.ensicaen.fr}
\end{tabular} \\\\
\begin{tabular}{>{\centering}p{0.48\textwidth}}
$(3)$ Ceremade, CNRS-Univ. Paris-Dauphine\\
Place du Mar\'echal De Lattre De Tassigny, 75775 Paris 16, France\\
\url{Gabriel.Peyre@ceremade.dauphine.fr}
\end{tabular} &
\begin{tabular}{>{\centering}p{0.48\textwidth}}
$(4)$ LMNO, CNRS-Univ. Caen\\
D\'epartement de Math\'ematiques, UFR de Sciences, 14032 Caen, France\\
\url{Chesneau.Christophe@math.unicaen.fr}
\end{tabular}
\end{tabular}

\begin{abstract}
In this paper, we investigate the degrees of freedom ($\dof$) of penalized $\ell_1$ minimization (also known as the Lasso) for linear regression models. We give a closed-form expression of the $\dof$ of the Lasso response. Namely, we show that for any given Lasso regularization parameter $\lambda$ and any observed data $y$ belonging to a set of full (Lebesgue) measure, the cardinality of the support of a particular solution of the Lasso problem is an unbiased estimator of the degrees of freedom. This is achieved without the need of uniqueness of the Lasso solution. Thus, our result holds true for both the underdetermined and the overdetermined case, where the latter was originally studied in \cite{zou}. \added{We also show, by providing a simple counterexample, that although the $\dof$ theorem of \cite{zou} is correct, their proof contains a flaw since their divergence formula holds on a different set of a full measure than the one that they claim. An effective estimator of the number of degrees of freedom may have several applications including an objectively guided choice of the regularization parameter in the Lasso through the $\sure$ framework. Our theoretical findings are illustrated through several numerical simulations.}
\end{abstract}
\paragraph{Keywords:} Lasso, model selection criteria, degrees of freedom, $\sure$.
\paragraph{AMS classification code:} Primary $62M10$, secondary $62M20$.

\newpage

\section{Introduction}\label{section1}

\subsection{Problem statement}\label{subsec:statement}
We consider the following linear regression model
\begin{equation}\label{eq0}
y = A x^{0} + \varepsilon,\qquad \mu=Ax^0,
\end{equation}
where $y \in \R^{n}$ is the observed data or the response vector, $A = \left( a_{1}, \cdots, a_{p} \right)$ is an $n \times p$ design matrix, $x^{0} = \left( x^{0}_{1}, \cdots, x^{0}_{p} \right)^{\Tr}$ is the vector of unknown regression coefficients and $ \varepsilon $ is a vector of i.i.d. centered Gaussian random variables with variance $\sigma^{2} > 0$. In this paper, the number of observations $n$ can be greater than the ambient dimension $p$ of the regression vector to be estimated. Recall that when $n < p$, \eqref{eq0} is an underdetermined linear regression model, whereas when $n \geq p$ and all the columns of $A$ are linearly independent, it is overdetermined. 

\added{Let $\widehat{x}(y)$ be an estimator of $x^0$, and $\widehat{\mu}(y) = A \widehat{x}(y)$ be the associated response or predictor. The concept of degrees of freedom plays a pivotal role in quantifying the complexity of a statistical modeling procedure. More precisely, since $y \sim \mathcal{N}( \mu = A x^0, \sigma^{2} \mathrm{Id}_{n \times n} )$ ($\mathrm{Id}_{n \times n}$ is the identity on $\R^n$), according to \cite{efron}, the degrees of freedom ($\dof$) of the response $\widehat{\mu}(y)$ is defined by
\begin{equation}\label{eq2}
df = \sum_{i = 1}^{n} \dfrac{ \cov ( \widehat{\mu}_{i}(y) , y_{i} )}{\sigma^{2}}.
\end{equation}
Many model selection criteria involve $df$, e.g. $C_{p}$ (Mallows \cite{mallows}), $\mbox{AIC}$ (Akaike Information Criterion, \cite{akaike}), $\mbox{BIC}$ (Bayesian Information Citerion, \cite{schwarz}), $\mbox{GCV}$ (Generalized Cross Validation, \cite{craven}) and $\sure$ (Stein's unbiased risk estimation \cite{stein}, see Section~\ref{fiabilite}). Thus, the $\dof$ is a quantity of interest in model validation and selection and it can be used to get the optimal hyperparameters of the estimator. Note that the optimality here is intended in the sense of the prediction $\widehat{\mu}(y)$ and not the coefficients $\widehat{x}(y)$.} 

\added{The well-known Stein's lemma \cite{stein} states that if $y \mapsto \widehat{\mu}(y)$ is weakly differentiable then its divergence is an unbiased estimator of its degrees of freedom, i.e.
\begin{equation}\label{eq4}
\widehat{df}(y) = \divg(\widehat{\mu}(y)) =  \sum_{i = 1}^{n} \dfrac{\partial \widehat{\mu}_{i}(y)}{\partial y_{i}}, \quad \text{and} \quad \mathbb{E}(\widehat{df}(y)) = df ~.
\end{equation}
\\

Here, in order to estimate $x^{0}$, we consider solutions to the Lasso problem, proposed originally in \cite{tibshirani}. The Lasso amounts to solving the following convex optimization problem
\begin{equation}\label{eq1}\tag{$\mbox{P}_{1} ( y, \lambda )$}
\min_{x \in \R^{p}} \dfrac{1}{2} \Vert y - A x \Vert_{2}^{2} + \lambda \Vert x \Vert_{1},
\end{equation}
where $\lambda > 0$ is called the Lasso regularization parameter and $\Vert \cdot \Vert_{2}$ (resp. $\Vert \cdot \Vert_{1}$) denotes the $\ell_{2}$ (resp. $\ell_{1}$) norm. 
An important feature of the Lasso is that it promotes sparse solutions. In the last years, there has been a huge amount of work where efforts have focused on investigating the theoretical guarantees of the Lasso as a sparse recovery procedure from noisy measurements. See, e.g., \cite{fanetli,fan,zhao,zouu,ravi,nardi,osborne1,efronn,fuchs,tropp}, to name just a few.}

\subsection{Contributions and related work}
Let $\widehat{\mu}_{\lambda}(y) = A  \widehat{x}_{\lambda}(y)$ be the Lasso response vector, where $\widehat{x}_{\lambda} ( y )$ is a solution of the Lasso problem \eqref{eq1}. Note that all minimizers of the Lasso share the same image under $A$, i.e. $\widehat{\mu}_{\lambda}(y)$ is uniquely defined; see Lemma~\ref{lem3} in Section~\ref{preuves} for details. The main contribution of this paper is first to provide an unbiased estimator of the degrees of freedom of the Lasso response for any design matrix. The estimator is valid everywhere except on a set of (Lebesgue) measure zero. We reach our goal without any additional assumption to ensure uniqueness of the Lasso solution. Thus, our result covers the challenging underdetermined case where the Lasso problem does not necessarily have a unique solution. It obviously holds when the Lasso problem \eqref{eq1} has a unique solution, and in particular in the overdetermined case studied in \cite{zou}. \added{Using the estimator at hand, we also establish the reliability of the $\sure$ as an unbiased estimator of the Lasso prediction risk.}


\added{While this paper was submitted, we became aware of the independent work of Tibshirani and Taylor \cite{tib12}, who studied the $\dof$ for general $A$ both for the Lasso and the general (analysis) Lasso. }

\added{Section~\ref{zou} is dedicated to a thorough comparison and discussion of connections and differences between our results and the one in \cite[Theorem 1]{zou} for the overdetermined case, and that of \cite{kato09,tib12,vaiter11} for the general case.}

\subsection{Overview of the paper}
This paper is organized as follows. Section~\ref{resultats} is the core contribution of this work where we state our main results. There, we provide the unbiased estimator of the $\dof$ of the Lasso, and we investigate the reliability of the $\sure$ estimate of the Lasso prediction risk. Then, we discuss relation of our work with concurrent one in the literature in Section~\ref{zou}. Numerical illustrations are given in Section~\ref{simulation}. The proofs of our results are postponed to Section~\ref{preuves}. A final discussion and perspectives of this work are provided in Section~\ref{group}. 

\section{Main results}\label{resultats}

\subsection{An unbiased estimator of the $\dof$}\label{unb}
\added{First, some notations and definitions are necessary. For any vector $x$, $x_{i}$ denotes its $i$th component. The support or the active set of $x$ is defined by
\[
I = \supp ( x ) = \lbrace i : {x}_{i} \neq 0 \rbrace, 
\]
and we denote its cardinality as $\vert \supp( x ) \vert = \vert I \vert$. We denote by $x_{I} \in \R^{|I|}$ the vector built by restricting $x$ to the entries indexed by $I$. The active matrix $A_{I}=( a_{i} )_{i \in I}$ associated to a vector $x$ is obtained by selecting the columns of $A$ indexed by the support $I$ of $x$. Let ${\cdot}^{\Tr}$ be the transpose symbol. Suppose that $A_{I}$ is full column rank, then we denote the Moore-Penrose pseudo-inverse of $A_{I}$, $A_{I}^{+} = ( A_{I}^{\Tr} A_{I} )^{- 1} A_{I}^{\Tr}$. $\sign ( \cdot )$ represents the sign function: $\sign ( a ) = 1$ if $a > 0$; $\sign ( a ) = 0$ if $a = 0$; $\sign ( a ) = - 1$ if $a < 0$.\\}
For any $I \subseteq \lbrace 1, 2, \cdots, p \rbrace$, let $V_{I}=\span ( A_I )$, $P_{V_I}$ the orthogonal projector onto $V_{I}$ and $P_{V_{I}^\perp}$ that onto the orthogonal complement $V_{I}^{\perp}$.\\

Let $S \in \lbrace - 1, 1 \rbrace^{\vert I \vert}$ be a sign vector, and $j \in \lbrace 1, 2, \cdots, p \rbrace$. Fix $\lambda > 0$. We define the following set of hyperplanes
\begin{equation}\label{hyper}
H_{I,j,S} = \lbrace u \in \R^{n} : \langle P_{V_{I}^{\perp}} (a_j), u \rangle = \pm \lambda (1 - \langle a_j,   (A_{I}^{+})^{\Tr} S \rangle ) \rbrace.
\end{equation}
Note that, if $a_{j}$ does not belong to $V_{I}$, then $H_{I,j,S}$ becomes a finite union of two hyperplanes. Now, we define the following finite set of indices
\begin{equation}\label{hyper}
\Omega = \lbrace \left( I,j,S \right) : a_{j} \not \in V_{I} \rbrace
\end{equation}
and let $G_{\lambda}$ be the subset of $ \R^{n} $ which excludes the finite union of hyperplanes associate to $\Omega$, that is
\begin{equation}\label{g-lambda}
G_{\lambda} = \R^{n}\setminus \bigcup_{(I,j,S) \in \Omega} H_{I,j,S}.
\end{equation}
To cut a long story short, $\bigcup_{(I,j,S) \in \Omega} H_{I,j,S}$ is a set of (Lebesgue) measure zero (Hausdorff dimension $n-1$), and therefore $G_{\lambda}$ is a set of full measure.\\

We are now ready to introduce our main theorem.
\begin{theo}\label{theo-general}
Fix $\lambda > 0$. For any $y \in G_{\lambda}$, consider $\mathcal{M}_{y, \lambda}$ the set of  solutions of \eqref{eq1}. Let $x^{*}_{\lambda}(y) \in \mathcal{M}_{y, \lambda}$ with support $I^*$ such that $A_{I^{*}}$ is full rank. Then, 
\begin{equation}
\vert I^{*} \vert = \min_{\widehat{x}_{\lambda}(y)  \in \mathcal{M}_{y, \lambda}} \vert \supp ( \widehat{x}_{\lambda}(y)  ) \vert.
\end{equation}
Furthermore, there exists $\varepsilon > 0$ such that for all $z \in \ball (y, \varepsilon)$, the $n$-dimensional ball with center $y$ and radius $\varepsilon$, the Lasso response mapping $z \mapsto \widehat{\mu}_{\lambda} (z)$ satisfies
\begin{equation}
\widehat{\mu}_{\lambda} (z) = \widehat{\mu}_{\lambda}(y) + P_{V_{I^{*}}} (z - y).
\end{equation}
\end{theo}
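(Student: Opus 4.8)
The plan is to combine the Karush--Kuhn--Tucker (KKT) characterization of the Lasso minimizers, the uniqueness of the response $\widehat\mu_{\lambda}(y)$ and of the associated subgradient (Lemma~\ref{lem3}), and the defining property of $G_{\lambda}$, which will be used to turn the membership $y\in G_{\lambda}$ into a \emph{strict} inequality on the correlations $\langle a_{j},y-\widehat\mu_{\lambda}(y)\rangle$. First I would record the optimality conditions: $x$ solves \eqref{eq1} if and only if $A^{\Tr}(y-Ax)=\lambda\eta$ for some $\eta\in\partial\|x\|_{1}$. Since $\widehat\mu_{\lambda}(y)=Ax$ is the same for every solution, so is $\eta=A^{\Tr}(y-\widehat\mu_{\lambda}(y))/\lambda$; hence every solution $x$ satisfies $\supp(x)\subseteq E:=\{j:|\langle a_{j},y-\widehat\mu_{\lambda}(y)\rangle|=\lambda\}$ and $\sign(x_{j})=\eta_{j}$ on its support. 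For a solution $x$ with support $I$ and $A_{I}$ of full column rank, put $S=\sign(x_{I})$; combining $\widehat\mu_{\lambda}(y)=A_{I}x_{I}\in V_{I}$ with $A_{I}^{\Tr}(y-\widehat\mu_{\lambda}(y))=\lambda S$ yields the explicit formulas $\widehat\mu_{\lambda}(y)=P_{V_{I}}y-\lambda(A_{I}^{+})^{\Tr}S$ and $x_{I}=A_{I}^{+}y-\lambda(A_{I}^{\Tr}A_{I})^{-1}S$.

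The core step is the following. Writing, for $j\notin I$, $\langle a_{j},y-\widehat\mu_{\lambda}(y)\rangle=\langle P_{V_{I}^{\perp}}(a_{j}),y\rangle+\lambda\langle a_{j},(A_{I}^{+})^{\Tr}S\rangle$, I observe that if $a_{j}\notin V_{I}$ then both $(I,j,S)$ and $(I,j,-S)$ belong to $\Omega$, so $y\in G_{\lambda}$ keeps $\langle P_{V_{I}^{\perp}}(a_{j}),y\rangle$ off the two critical values, i.e. $|\langle a_{j},y-\widehat\mu_{\lambda}(y)\rangle|\neq\lambda$; since optimality already gives $|\langle a_{j},y-\widehat\mu_{\lambda}(y)\rangle|\le\lambda$, the inequality is strict and $j\notin E$. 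Consequently every $j\in E$ has $a_{j}\in V_{I}$, so $V_{E}\subseteq V_{I}$; and since $I\subseteq E$ we also have $V_{I}\subseteq V_{E}$, whence $V_{I}=V_{E}$ and $|I|=\dim V_{E}$. Applied to $x^{*}_{\lambda}(y)$ this gives $|I^{*}|=\dim V_{E}$. For the minimality claim, I would take any solution $\widehat x_{\lambda}(y)$ of least support size; if $A_{\supp(\widehat x_{\lambda}(y))}$ were rank deficient, a nonzero kernel vector extended by zeros would let us move along the (closed, pointed) line of solutions until one more coordinate vanishes, contradicting minimality; hence $A_{\supp(\widehat x_{\lambda}(y))}$ has full rank and the identity above gives $|\supp(\widehat x_{\lambda}(y))|=\dim V_{E}=|I^{*}|$, proving the first assertion.

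For the local statement I would produce an explicit solution of the perturbed problem: for $z\in\R^{n}$ define $\widetilde x(z)\in\R^{p}$ supported on $I^{*}$ by $\widetilde x(z)_{I^{*}}=A_{I^{*}}^{+}z-\lambda(A_{I^{*}}^{\Tr}A_{I^{*}})^{-1}S^{*}$, with $S^{*}=\sign(x^{*}_{\lambda}(y)_{I^{*}})$. At $z=y$ this is exactly $x^{*}_{\lambda}(y)_{I^{*}}$, whose entries are all nonzero with signs $S^{*}$, so by continuity $\sign(\widetilde x(z)_{I^{*}})=S^{*}$ (hence $\supp(\widetilde x(z))=I^{*}$) on some ball $\ball(y,\varepsilon_{0})$. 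I then verify the KKT conditions for $\widetilde x(z)$: on $I^{*}$ a direct computation using $A_{I^{*}}^{\Tr}P_{V_{I^{*}}^{\perp}}=0$ gives $A_{I^{*}}^{\Tr}(z-A\widetilde x(z))=\lambda S^{*}$; for $j\notin I^{*}$ with $a_{j}\in V_{I^{*}}$ the quantity $\langle a_{j},z-A\widetilde x(z)\rangle=\lambda\langle a_{j},(A_{I^{*}}^{+})^{\Tr}S^{*}\rangle$ is independent of $z$ and equals $\langle a_{j},y-\widehat\mu_{\lambda}(y)\rangle$, hence has modulus $\le\lambda$; and for $j\notin I^{*}$ with $a_{j}\notin V_{I^{*}}$ it is an affine function of $z$ whose modulus at $z=y$ is strictly below $\lambda$ by the strict inequality established above, hence stays below $\lambda$ on a small ball. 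As only finitely many indices intervene, a single radius $\varepsilon>0$ works; then $\widetilde x(z)$ solves \eqref{eq1} with data $z$, and by uniqueness of the response $\widehat\mu_{\lambda}(z)=A\widetilde x(z)=P_{V_{I^{*}}}z-\lambda(A_{I^{*}}^{+})^{\Tr}S^{*}=\widehat\mu_{\lambda}(y)+P_{V_{I^{*}}}(z-y)$.

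I expect the only genuinely delicate point to be the second paragraph: extracting from $y\in G_{\lambda}$ the strict inequality $|\langle a_{j},y-\widehat\mu_{\lambda}(y)\rangle|<\lambda$ for every $a_{j}\notin V_{I^{*}}$, and deducing the identity $V_{E}=V_{I^{*}}$ together with the minimality of $|I^{*}|$ over all solutions. Once this structural fact is in hand, the affine local formula is essentially mechanical, the remaining care being only the bookkeeping needed to merge finitely many smallness requirements into a single radius $\varepsilon$.
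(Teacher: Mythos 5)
Your proposal is correct and follows essentially the same route as the paper's proof: KKT conditions plus uniqueness of the fitted value identify the equicorrelation set, membership in $G_{\lambda}$ forces every equicorrelated column outside the support to lie in $V_{I^{*}}$ (hence $\vert I^{*}\vert=\dim V_{E}$ and the minimality claim), and the local affine formula is obtained by explicitly constructing and KKT-verifying the perturbed solution supported on $I^{*}$. The only differences are cosmetic; in fact your use of both $(I,j,S)$ and $(I,j,-S)$ to cover the two sign cases of $\vert\langle a_{j},y-\widehat{\mu}_{\lambda}(y)\rangle\vert=\lambda$ is slightly more careful than the paper's single invocation of $H_{I^{*},j,S^{*}}$.
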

\added{As stated, this theorem assumes the existence of a solution whose active matrix $A_{I^*}$ is full rank. This can be shown to be true; see e.g. \cite[Proof of Theorem 1]{dossal} or \cite[Theorem 3, Section B.1]{rosset04}\footnote{This proof is alluded to in the note at the top of \cite[Page 363]{sardy}.}. It is worth noting that this proof is constructive, in that it yields a solution $x^*_\lambda(y)$ of \eqref{eq1} such that $A_{I^*}$ is full column rank from any solution $\widehat{x}_\lambda(y)$ whose active matrix has a nontrivial kernel. This will be exploited in Section~\ref{simulation} to derive an algorithm to get $x^*_\lambda(y)$, and hence $I^*$.}\\

A direct consequence of our main theorem is that outside $G_\lambda$, the mapping $\widehat{\mu}_{\lambda}(y)$ is $C^\infty$ and the sign and support are locally constant. Applying Stein's lemma yields Corollary~\ref{Corollary1} below. The latter states that the number of nonzero coefficients of $x^{*}_{\lambda}(y)$ is an unbiased estimator of the $\dof$ of the Lasso. 
\begin{cor}\label{Corollary1}
Under the assumptions and with the same notations as in Theorem~\ref{theo-general}, we have the following divergence formula
\begin{equation}
\widehat{df}_\lambda(y) := \divg ( \widehat{\mu}_{\lambda}(y) ) = \vert I^{*} \vert.
\end{equation}
Therefore,
\begin{equation}\label{dfff}
df = \E(\widehat{df}_\lambda(y)) = \E ( \vert I^{*} \vert ).
\end{equation}
\end{cor}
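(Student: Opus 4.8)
The plan is to combine the local description of the Lasso response given by Theorem~\ref{theo-general} with Stein's lemma \eqref{eq4}. Note first that, the Lasso response $\widehat{\mu}_\lambda$ being a well-defined map (Lemma~\ref{lem3}) and the first assertion of Theorem~\ref{theo-general} forcing $\vert I^*\vert$ to be independent of the particular full-rank solution chosen, the quantity $\widehat{df}_\lambda(y)=\vert I^*\vert$ is unambiguous; what remains is to identify it with the divergence pointwise, and then to pass to expectations.

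First I would fix $\lambda>0$ and $y\in G_\lambda$, with $x^*_\lambda(y)$ and $I^*$ as in Theorem~\ref{theo-general}. By the second assertion of that theorem there is $\varepsilon>0$ with $\widehat{\mu}_\lambda(z)=\widehat{\mu}_\lambda(y)+P_{V_{I^*}}(z-y)$ for all $z\in\ball(y,\varepsilon)$, so $z\mapsto\widehat{\mu}_\lambda(z)$ is affine — hence $C^\infty$ — on this ball, with constant Jacobian $P_{V_{I^*}}$. The divergence is the trace of the Jacobian, so
\[
\widehat{df}_\lambda(y)=\divg(\widehat{\mu}_\lambda(y))=\tr\bigl(P_{V_{I^*}}\bigr).
\]
Since an orthogonal projector is idempotent and self-adjoint its trace equals its rank, and because $A_{I^*}$ is full column rank, $\rank(P_{V_{I^*}})=\dim\span(A_{I^*})=\vert I^*\vert$. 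Hence $\widehat{df}_\lambda(y)=\vert I^*\vert$ for every $y\in G_\lambda$, that is for Lebesgue-almost every $y$, since $\R^n\setminus G_\lambda$ is a finite union of hyperplanes and therefore negligible.

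It then remains to integrate this identity against the Gaussian density, which is where the hypotheses of Stein's lemma must be verified. The key point is that $y\mapsto\widehat{\mu}_\lambda(y)$ is globally $1$-Lipschitz on $\R^n$: through the dual characterization of the Lasso fit one has $\widehat{\mu}_\lambda(y)=(\mathrm{Id}-P_{C})(y)$ with $C=\{z\in\R^n:\Vert A^{\Tr}z\Vert_\infty\le\lambda\}$ a closed convex set, and $\mathrm{Id}$ minus the projection onto a convex set is (firmly) non-expansive. A globally Lipschitz map is weakly differentiable, and by Rademacher's theorem its weak Jacobian agrees a.e. with the classical one computed above; together with the bound $\vert I^*\vert\le n$ this supplies the integrability needed to apply \eqref{eq4}, yielding $df=\E(\widehat{df}_\lambda(y))$, and combined with $\widehat{df}_\lambda(y)=\vert I^*\vert$ almost surely this gives $df=\E(\vert I^*\vert)$.

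I expect the only genuinely delicate step to be this last one: one must be sure that the exceptional hyperplanes making up $\R^n\setminus G_\lambda$ are harmless for Stein's formula, and it is precisely the global Lipschitz/weak-differentiability property of $\widehat{\mu}_\lambda$ — rather than mere smoothness off a null set — that licenses the passage to the expectation. Everything else (the local affine form and $\tr=\rank$ for an orthogonal projector) is an immediate consequence of Theorem~\ref{theo-general}.
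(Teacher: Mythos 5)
Your proof is correct and follows the same overall architecture as the paper's: compute the divergence pointwise on $G_\lambda$ from the locally affine form $\widehat{\mu}_\lambda(z)=\widehat{\mu}_\lambda(y)+P_{V_{I^*}}(z-y)$ supplied by Theorem~\ref{theo-general}, identify $\tr(P_{V_{I^*}})=\vert I^*\vert$ using that $A_{I^*}$ has full column rank, and then justify Stein's lemma through Lipschitz continuity of $y\mapsto\widehat{\mu}_\lambda(y)$. The one place where you genuinely diverge is the Lipschitz step. The paper deduces that $\widehat{\mu}_\lambda$ is uniformly $1$-Lipschitz on $G_\lambda$ directly from the local affine expansion (since $\Vert P_{V_{I^*}}(h)\Vert_2\le\Vert h\Vert_2$) and then extends to all of $\R^n$ by continuity of the Lasso response across the negligible set $\R^n\setminus G_\lambda$, invoking almost differentiability in the sense of \cite{meyer,efronn}. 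You instead use the dual characterization $\widehat{\mu}_\lambda(y)=(\mathrm{Id}-P_C)(y)$ with $C=\{z:\Vert A^{\Tr}z\Vert_\infty\le\lambda\}$, so that global $1$-Lipschitzness (indeed firm nonexpansiveness) holds by a one-line convex-analysis argument, independently of the local structure. Your route is self-contained and arguably cleaner on this point — it is the device used by Tibshirani and Taylor \cite{tib12} — whereas the paper's route reuses the machinery already built for Theorem~\ref{theo-general} and avoids introducing the dual polytope; both correctly license the passage to the expectation, since a globally Lipschitz map is weakly differentiable with weak Jacobian agreeing a.e. with the classical one.
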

Obviously, in the particular case where the Lasso problem has a unique solution, our result holds true. 

\subsection{Reliability of the $\sure$ estimate of the Lasso prediction risk}\label{fiabilite}
In this work, we focus on the $\sure$ as a model selection criterion. The $\sure$ applied to the Lasso reads
\begin{equation}\label{sure}
\sure (\widehat{\mu}_\lambda(y)) = - n \sigma^{2} + \Vert \widehat{\mu}_\lambda(y) - y \Vert_{2}^{2} + 2 \sigma^{2} \widehat{df}_\lambda(y) ,
\end{equation}
where $\widehat{df}(y)$ is an unbiased estimator of the $\dof$ as given in Corollary~\ref{Corollary1}. It follows that the $\sure(\widehat{\mu}_\lambda(y))$ is an unbiased estimate of the prediction risk, i.e.
\[
\mse (\mu) = \E \parenth{\Vert \widehat{\mu}_\lambda(y) - \mu \Vert_{2}^{2}} = \E\parenth{\sure (\widehat{\mu}_\lambda(y))}.
\]
We now evaluate its reliability by computing the expected squared-error between $\sure(\widehat{\mu}_\lambda(y))$ and $\se(\widehat{\mu}_\lambda(y))$, the true squared-error, that is
\begin{equation}\label{se}
\se(\widehat{\mu}_\lambda(y)) = \Vert \widehat{\mu}_{\lambda}(y) - \mu \Vert_{2}^{2}.
\end{equation}

\begin{theo}\label{th2}
Under the assumptions of Theorem~\ref{theo-general}, we have
\begin{equation}\label{rh}
\E \left( \left( \sure ( \widehat{\mu}_{\lambda}(y) ) - \se(\widehat{\mu}_\lambda(y)) \right)^{2} \right) = - 2 \sigma^{4} n + 4 \sigma^{2} \E \left( \Vert \widehat{\mu}_{\lambda}(y)- y \Vert_{2}^{2} \right) + 4 \sigma^{4} \E \left( \vert I^{*} \vert \right).
\end{equation}
Moreover,
\begin{equation}
\E \left( \left( \dfrac{\sure ( \widehat{\mu}_{\lambda}(y)) - \se(\widehat{\mu}_\lambda(y))}{n \sigma^{2}} \right)^{2} \right) = \bigO{\frac{1}{n}}.
\end{equation}
\end{theo}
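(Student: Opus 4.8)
\noindent\emph{Proof strategy.}
The plan is to reduce the left-hand side of \eqref{rh} to $\chi^{2}$-moments of $\varepsilon$ plus a handful of Stein-type integrations by parts, the latter fed by the local structure established in Theorem~\ref{theo-general}. Write $y=\mu+\varepsilon$, put $F:=\widehat{\mu}_{\lambda}(y)-y$ and $R:=\sure(\widehat{\mu}_{\lambda}(y))-\se(\widehat{\mu}_{\lambda}(y))$. Substituting \eqref{sure}, using $\widehat{df}_{\lambda}(y)=\vert I^{*}\vert$ from Corollary~\ref{Corollary1}, and expanding $\se(\widehat{\mu}_{\lambda}(y))=\Vert F+\varepsilon\Vert_{2}^{2}$, a direct computation gives
\begin{equation*}
R \;=\; \underbrace{\parenth{n\sigma^{2}-\Vert\varepsilon\Vert_{2}^{2}}}_{=:\,Z} \;+\; \underbrace{2\sigma^{2}\parenth{\vert I^{*}\vert-n}-2\langle F,\varepsilon\rangle}_{=:\,Y}.
\end{equation*}
By Theorem~\ref{theo-general}, on $G_{\lambda}$ the response $z\mapsto\widehat{\mu}_{\lambda}(z)$ is locally affine with weak Jacobian equal a.e. to the orthogonal projector $P_{V_{I^{*}}}$; in particular $\widehat{\mu}_{\lambda}$ is continuous with at most linear growth (feasibility of $x=0$ in \eqref{eq1} gives $\Vert\widehat{\mu}_{\lambda}(y)-y\Vert_{2}\le\Vert y\Vert_{2}$), $\divg(F)=\vert I^{*}\vert-n$ a.e., and Stein's lemma (as used in Corollary~\ref{Corollary1}) yields $\E(Z)=\E(Y)=0$. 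Hence $\E(R^{2})=\E(Z^{2})+2\E(ZY)+\E(Y^{2})$ with $\E(Z^{2})=\V(\Vert\varepsilon\Vert_{2}^{2})=2n\sigma^{4}$.

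I would then evaluate the two remaining expectations by Stein's lemma applied to the continuous, piecewise-$C^{\infty}$, polynomially bounded vector fields that appear. For $\E(ZY)$, the expansion produces terms $\E(\Vert\varepsilon\Vert_{2}^{2}\vert I^{*}\vert)$, $\E(\Vert\varepsilon\Vert_{2}^{2}\langle F,\varepsilon\rangle)$ and $\E(\langle F,\varepsilon\rangle)$; integrating by parts against the field $y\mapsto\Vert\varepsilon\Vert_{2}^{2}\,F(y)$ reduces the middle one, and the $\E(\Vert\varepsilon\Vert_{2}^{2}\vert I^{*}\vert)$ contributions cancel, leaving $\E(ZY)=4\sigma^{4}\parenth{\E(\vert I^{*}\vert)-n}$. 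The term $\E(Y^{2})$ is the crux: after expansion it involves $\E(\vert I^{*}\vert^{2})$, $\E(\vert I^{*}\vert\langle F,\varepsilon\rangle)$ and $\E(\langle F,\varepsilon\rangle^{2})$, and Stein's lemma applied to $y\mapsto F_{i}(y)\langle F(y),\varepsilon\rangle$ gives
\begin{equation*}
\E\!\parenth{\langle F,\varepsilon\rangle^{2}} \;=\; \sigma^{2}\,\E\!\parenth{\,\tr(J_{F})\,\langle F,\varepsilon\rangle + \langle J_{F}F,\varepsilon\rangle + \Vert F\Vert_{2}^{2}\,},
\end{equation*}
$J_{F}$ being the Jacobian of $F$. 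Here one invokes two structural facts from Theorem~\ref{theo-general}: $J_{F}=-P_{V_{I^{*}}^{\perp}}$ is an orthogonal projector, so $\tr(J_{F}^{2})=-\tr(J_{F})=n-\vert I^{*}\vert$; and $\widehat{\mu}_{\lambda}(y)\in V_{I^{*}}$, so $J_{F}F=-P_{V_{I^{*}}^{\perp}}F=P_{V_{I^{*}}^{\perp}}y$ and $J_{F}\widehat{\mu}_{\lambda}(y)=0$. Feeding these back, the $\E(\vert I^{*}\vert^{2})$ terms cancel identically and one is left with $\E(Y^{2})=4n\sigma^{4}+4\sigma^{2}\E(\Vert F\Vert_{2}^{2})-4\sigma^{4}\E(\vert I^{*}\vert)$. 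Summing the three pieces yields exactly \eqref{rh}.

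Finally, for the $\bigO{1/n}$ rate I would bound the right-hand side of \eqref{rh}: feasibility of $x=0$ in \eqref{eq1} gives $\tfrac12\Vert\widehat{\mu}_{\lambda}(y)-y\Vert_{2}^{2}\le\tfrac12\Vert y\Vert_{2}^{2}$, hence $\E(\Vert F\Vert_{2}^{2})\le\E(\Vert y\Vert_{2}^{2})=\Vert\mu\Vert_{2}^{2}+n\sigma^{2}$, while $\vert I^{*}\vert\le\rank(A)\le n$; under the usual normalization $\Vert\mu\Vert_{2}^{2}=\bigO{n}$ this gives $\E(R^{2})=\bigO{n}$, and dividing by $n^{2}\sigma^{4}$ closes the argument. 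I expect the main obstacle to be the computation of $\E(Y^{2})$, and specifically the verification that the $\E(\vert I^{*}\vert^{2})$ terms vanish: this rests entirely on the Jacobian of the Lasso response being an orthogonal projector, i.e. $\tr(J^{2})=\tr(J)$, which is precisely the property that singles the Lasso out from a generic estimator --- for the latter $\E((\sure-\se)^{2})$ would depend on $\E((\divg\widehat{\mu})^{2})$ rather than only on $\E(\vert I^{*}\vert)$. A secondary, more routine point is justifying Stein's lemma for these merely Lipschitz maps: being globally continuous with polynomial growth, they belong to the right Sobolev class, so the pointwise derivatives furnished by Theorem~\ref{theo-general} coincide with the weak ones and no singular boundary contributions appear.
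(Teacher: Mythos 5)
Your proof is correct and arrives at exactly \eqref{rh}, but it organizes the computation differently from the paper, so a comparison is worthwhile. The paper splits $\sure-\se$ as $(\sure-Q_1)+(Q_1-\se)$ with $Q_1=\Vert\widehat{\mu}_\lambda(y)\Vert_2^2+\Vert\mu\Vert_2^2-2\langle y,\widehat{\mu}_\lambda(y)\rangle+2\sigma^2\divg(\widehat{\mu}_\lambda(y))$: the term $\sure-Q_1=\Vert y\Vert_2^2-\E\Vert y\Vert_2^2$ is handled by $\chi^2$ moments, the second moment of $Q_1-\se=2(\sigma^2\divg(\widehat{\mu}_\lambda(y))-\langle\varepsilon,\widehat{\mu}_\lambda(y)\rangle)$ is imported from \cite[Property~1]{luisier}, and the cross term is computed by two integrations by parts. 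Your decomposition $R=Z+Y$ with $Z=n\sigma^2-\Vert\varepsilon\Vert_2^2$ and $Y=2(\sigma^2\divg F-\langle F,\varepsilon\rangle)$ is the analogous split written in terms of the residual $F=\widehat{\mu}_\lambda(y)-y$ rather than the fit, and you rederive the Luisier-type identity $\E\langle F,\varepsilon\rangle^2=\sigma^2\E\bigl(\tr(J_F)\langle F,\varepsilon\rangle+\langle J_F F,\varepsilon\rangle+\Vert F\Vert_2^2\bigr)$ from scratch. I verified your three pieces: $\E(Z^2)=2n\sigma^4$, $\E(ZY)=4\sigma^4(\E\vert I^*\vert-n)$ and $\E(Y^2)=4n\sigma^4+4\sigma^2\E(\Vert F\Vert_2^2)-4\sigma^4\E(\vert I^*\vert)$ are all correct and sum to the right-hand side of \eqref{rh}. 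What your route buys is self-containedness and transparency: the cancellation of $\E(\vert I^*\vert^2)$, which in the paper is hidden inside the cited identity combined with $\tr(P_{V_{I^*}}^2)=\tr(P_{V_{I^*}})=\vert I^*\vert$, appears explicitly in your computation via $\E[\divg F\cdot\langle F,\varepsilon\rangle]=\sigma^2\E[(\divg F)^2]$ (legitimate because $\vert I^*\vert$ is locally constant on $G_\lambda$), and you correctly single out idempotence of $J_{\widehat{\mu}_\lambda}$ as the structural reason the answer depends only on $\E(\vert I^*\vert)$ and not on $\E((\divg\widehat{\mu}_\lambda)^2)$. Two minor deviations, neither a gap: for the $\bigO{1/n}$ rate you assume $\Vert\mu\Vert_2^2=\bigO{n}$ whereas the paper keeps $\Vert\mu\Vert_2$ fixed and records the explicit bound \eqref{bs} --- both give the stated rate; and your justification of the repeated Stein steps (global Lipschitz continuity plus polynomial growth of the integrands) is at the same level of rigor as the paper's.
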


\section{Relation to prior work}\label{zou}

\subsection*{Overdetermined case \cite{zou}}
The authors in \cite{zou} studied the $\dof$ of the Lasso in the overdetermined case. Precisely, when $n \geq p$ and all the columns of the design matrix $A$ are linearly independent, i.e. $\rank (A) = p$. In fact, in this case the Lasso problem has a unique minimizer $\widehat{x}_{\lambda}(y)=x^*_{\lambda}(y)$ (see Theorem~\ref{theo-general}).

Before discussing the result of \cite{zou}, let's point out a popular feature of $\widehat{x}_{\lambda}(y)$ as $ \lambda $ varies in $]0,+\infty[$:
\begin{itemize}
\item For $\lambda \geq \Vert A^{\Tr} y  \Vert_{\infty}$, the optimum is attained at $\widehat{x}_\lambda(y) = 0$.
\item The interval $\left] 0, \Vert A^{\Tr} y \Vert_{\infty} \right[$ is divided into a finite number of subintervals characterized by the fact that within each such subinterval, the support and the sign vector of $\widehat{x}_{\lambda}(y)$ are constant. Explicitly, let $\parenth{\lambda_{m}}_{0 \leq m \leq K}$ be the finite sequence of $\lambda$'s values corresponding to a variation of the support and the sign of $\widehat{x}_\lambda(y)$, defined by
\begin{equation*}
\Vert A^{\Tr} y  \Vert_{\infty} = \lambda_{0} > \lambda_{1} > \lambda_{2} > \cdots > \lambda_{K} = 0.
\end{equation*}
Thus, in $] \lambda_{m+1}, \lambda_{m} [$, the support and the sign of $\widehat{x}_{\lambda}(y)$ are constant, see \cite{efronn,osborne1,osborne2}. Hence, we call $\parenth{\lambda_{m}}_{0 \leq m \leq K}$ the \textit{transition points}.
\end{itemize}
Now, let $\lambda \in ] \lambda_{m+1}, \lambda_{m} [$. Thus, from Lemma~\ref{lem1} \added{(see Section~\ref{preuves})}, we have the following implicit form of $\widehat{x}_\lambda(y)$,
\begin{equation}\label{eq16}
( \widehat{x}_\lambda(y))_{I_{m}}  = A_{I_{m}}^{+} y - \lambda ( A_{I_{m}}^{\Tr} A_{I_{m}} )^{- 1} S{m},
\end{equation}
where $I_{m}$ and $S^{m}$ are respectively the (constant) support and sign vector of $\widehat{x}_\lambda(y)$ for $\lambda \in ] \lambda_{m+1}, \lambda_{m} [$. Hence, based on \eqref{eq16}, \cite{zou} showed that for all $\lambda > 0$, there exists a set of measure zero $\mathcal{N}_{\lambda}$, which is a finite collection of hyperplanes in $\R^{n}$, and they defined 
\begin{equation}\label{kappa}
\mathcal{K}_{\lambda} = \R^{n} \setminus \mathcal{N}_{\lambda},
\end{equation}
so that $\forall~ y \in \mathcal{K}_{\lambda}$, $\lambda$ is not any of the transition points.\\
Then, for the overdetermined case, \cite{zou} stated that for all $y \in \mathcal{K}_{\lambda}$,  the number of nonzero coefficients of the unique solution of \eqref{eq1} is an unbiased estimator of the $\dof$. In fact, their main argument is that, by eliminating the vectors associated to the transition points, the support and the sign of the Lasso solution are locally constant with respect to $y$, see \cite[Lemma 5]{zou}.

We recall that the overdetermined case, considered in \cite{zou}, is a particular case of our result since the minimizer is unique. Thus, according to the Corollary~\ref{Corollary1}, we find the same result as \cite{zou} but valid on a different set $y \in G_{\lambda} = \R^{n}\setminus \bigcup_{(I,j,S) \in \Omega} H_{I,j,S}$. A natural question arises: can we compare our assumption to that of \cite{zou} ? In other words, is there a link between $\mathcal{K}_{\lambda}$ and $G_{\lambda}$ ?\\

The answer is that, depending on the matrix $A$, these two sets may be different. \added{More importantly, it turns out that although the $\dof$ formula \cite[Theorem 1]{zou} is correct, unfortunately, their proof contains a flaw since their divergence formula \cite[Lemma 5]{zou} is not true on the set $\mathcal{K}_{\lambda}$. We prove this by providing a simple counterexample.}

\paragraph{Example of vectors in $G_{\lambda}$ but not in $\mathcal{K}_{\lambda}$}
Let $\{e_1,e_2\}$ be an orthonormal basis of $\mathbb{R}^2$ and 
let's define $a_1=e_1$ and $a_2=e_1+e_2$, and $A$ the matrix 
whose columns are $a_1$ and $a_2$.

Let's define $I=\{1\}$, $j=2$ and $S=1$. It turns out that $A_I^+=a_1$
and $\langle (A_I^+)^\Tr S,a_j\rangle=1$ which implies that for all $\lambda>0$,
\begin{equation*}
H_{I,j,S}=\{u\in\mathbb{R}^n\,: \langle P_{V_I^{\perp}}(a_j),u\rangle=0\} = \span (a_1) ~.
\end{equation*}
Let $y=\alpha a_1$ with $\alpha>0$, for any $\lambda>0$, $y\in H_{I,j,S}$ (or equivalently here $y \notin G_{\lambda}$). Using Lemma~\ref{lem1} \added{(see Section~\ref{preuves})}, one gets that for any $\lambda\in]0,\alpha[$, 
the solution of \eqref{eq1} is $\widehat{x}_\lambda(y)=(\alpha-\lambda,0)$ and that for any $\lambda\geq \alpha$, $\widehat{x}_\lambda(y)=(0,0)$. Hence the only transition point is $\lambda_0=\alpha$. It follows that for $\lambda<\alpha$, $y$ belongs to $\mathcal{K}_{\lambda}$ defined in \cite{zou}, but $y\notin G_{\lambda}$.

We prove then that in any ball centered at $y$, there exists a vector 
$z_{1}$ such that the support of the solution of $(\mathrm{P}_1(z_{1},\lambda))$ is 
different from the support of \eqref{eq1}.\\
Let's choose $\lambda<\alpha$ and $\varepsilon\in]0,\alpha-\lambda[$ and 
let's define $z_1=y+\varepsilon e_2$. From Lemma~\ref{lem1} \added{(see Section~\ref{preuves})}, 
one deduces that the solution of $(\mathrm{P}_1(z_{1},\lambda))$ is equal to 
$\widehat{x}_\lambda(z_1)=(\alpha-\lambda-\varepsilon,\varepsilon)$ whose support 
is different from that of $\widehat{x}_\lambda(y)=(\alpha-\lambda,0)$.\\

More generally, when there are sets $\{I,j,S\}$ such that $\langle (A_I^+)^\Tr S,a_j\rangle=1$, a difference between the two sets $G_{\lambda}$ and $\mathcal{K}_{\lambda}$ may arise. Clearly, $G_{\lambda}$ is not only the set of transition points associated to $\lambda$. 

According to the previous example, in this specific situation, for any $\lambda>0$ there may exist some vectors $y$ that are not transition points associated to $\lambda$ where the support of the solution of \eqref{eq1} is not stable to infinitesimal perturbations of $y$. This situation may occur for under or overdetermined problems. In summary, even in the overdetermined case, excluding the set of transition points is not sufficient to guarantee stability of the support and sign of the Lasso solution. 

\begin{figure}[htbp]
\begin{center}
\includegraphics[width=0.2\textwidth]{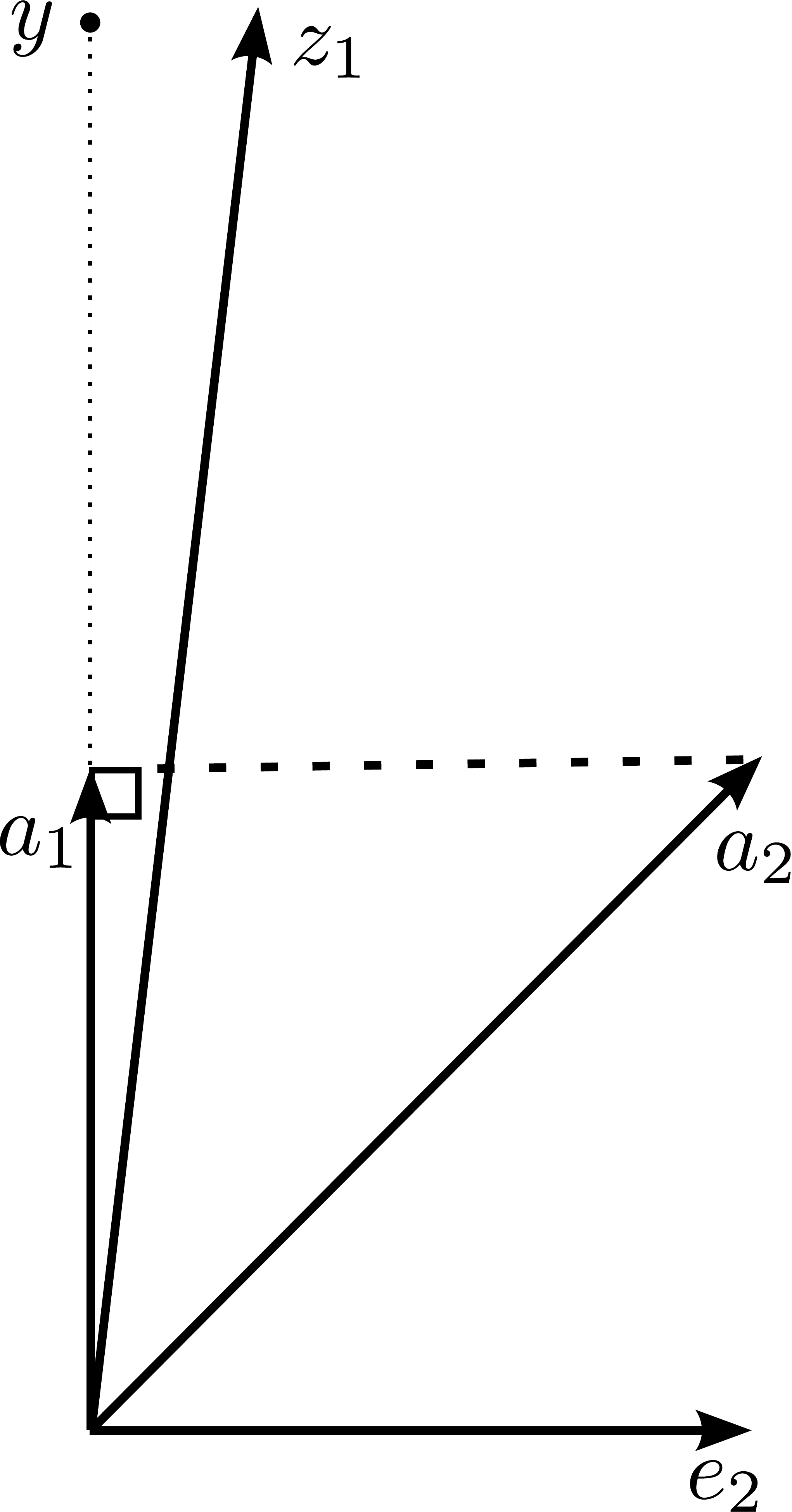}
\caption{A counterexample for $n=p=2$ of vectors in $G_{\lambda}$ but not in $\mathcal{K}_{\lambda}$. See text for a detailed discussion.}
\end{center}
\label{fig2}
\end{figure}

\added{\subsection*{General case \cite{kato09,tib12,vaiter11}}
In \cite{kato09}, the author studies the degrees of freedom of a generalization of the Lasso where the regression
coefficients are constrained to a closed convex set. When the latter is a $\ell_1$ ball and $p > n$, he proposes the cardinality of the support as an estimate of $df$ but under a restrictive assumption on $A$ under which the Lasso problem has a unique solution.

In \cite[Theorem 2]{tib12}, the authors proved that
\[
df = \E (\rank(A_I))
\]
where $I=I(y)$ is the active set of any solution $\widehat{x}_\lambda(y)$ to \eqref{eq1}. This coincides with Corollary~\ref{Corollary1} when $A_I$ is full rank with $\rank(A_I)=\rank(A_{I^*})$. Note that in general, there exist vectors $y \in \mathbb{R}^n$ where the smallest cardinality among all supports of Lasso solutions is different from the rank of the active matrix associated to the largest support. But these vectors are precisely those excluded in $G_{\lambda}$. In the case of the generalized Lasso (a.k.a. analysis sparsity prior in the signal processing community), Vaiter et al. \cite[Corollary 1]{vaiter11} and Tibshirani and Taylor \cite[Theorem 3]{tib12} provide a formula of an unbiased estimator of $df$. This formula reduces to that of Corollary~\ref{Corollary1} when the analysis operator is the identity.} 

\begin{figure}[htbp]
\subfigure[Gaussian]{\includegraphics[width=\textwidth]{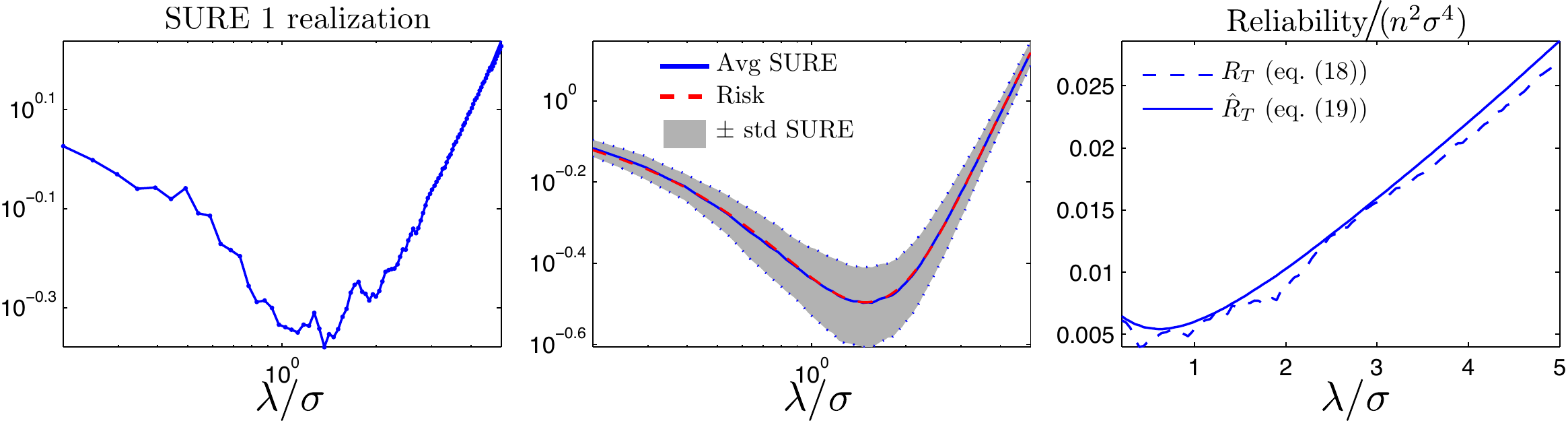}}
\subfigure[Convolution]{\includegraphics[width=\textwidth]{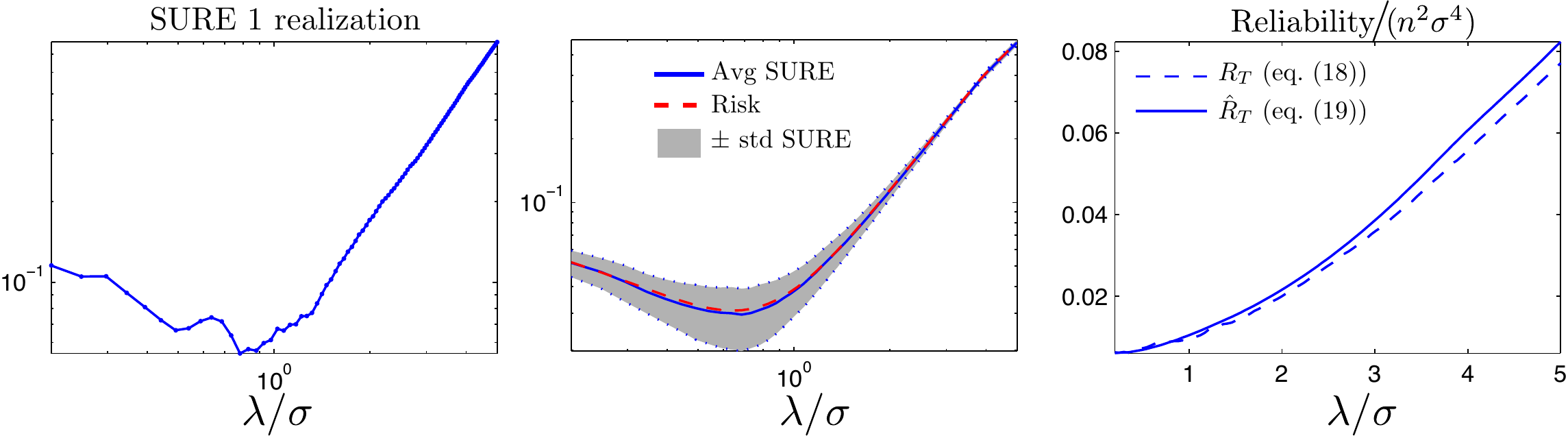}}
\caption{\added{The $\sure$ and its reliability as a function of $\lambda$ for two types of design matrices. (a) Gaussian; (b) Convolution. For each kind of design matrix, we associate three plots.}}
\label{fig1}
\end{figure}

\begin{figure}[htbp]
\begin{centering}
\subfigure[$\lambda/\sigma=0.1$]{\includegraphics[width=0.75\textwidth]{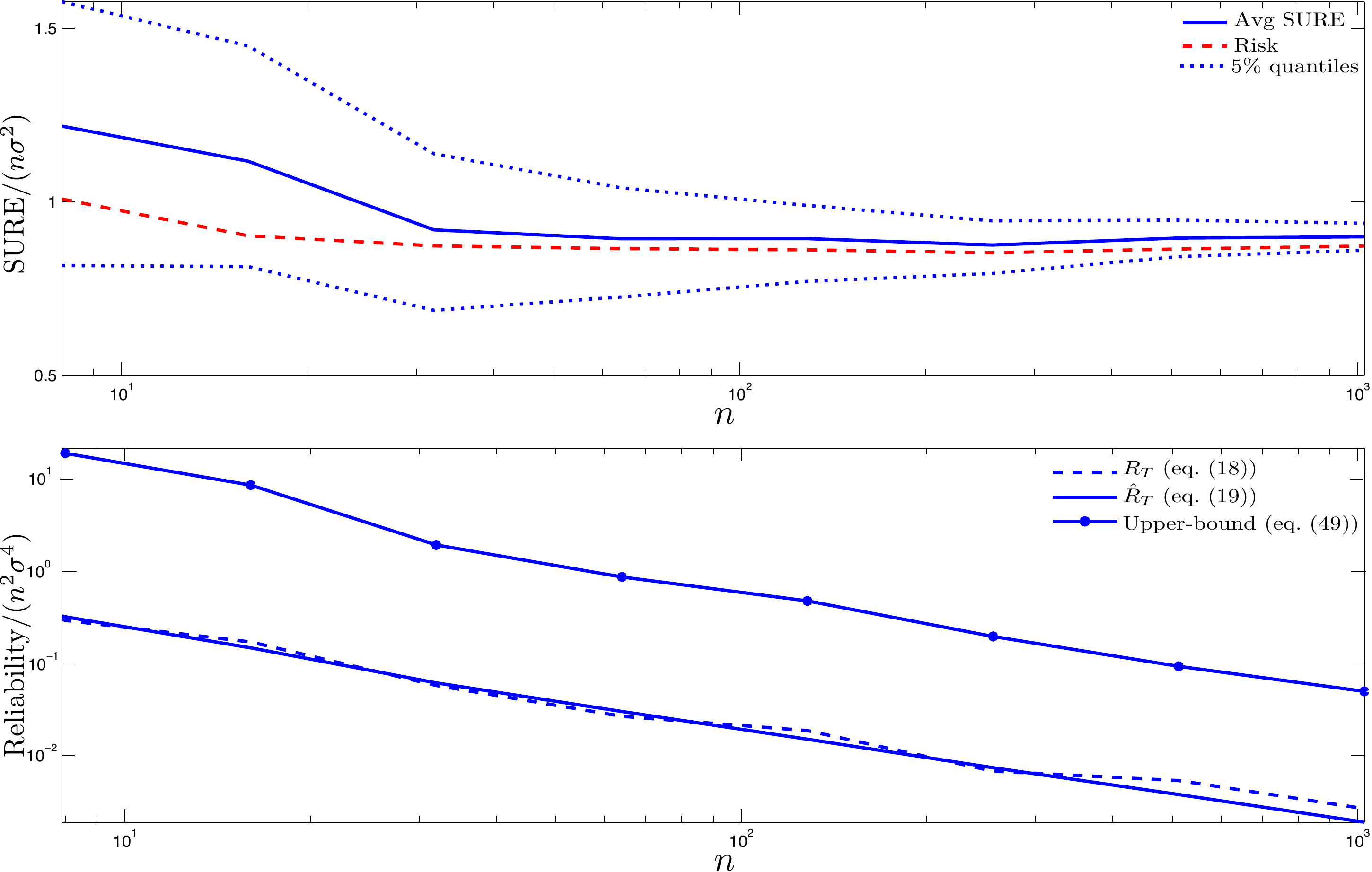}}\\
\subfigure[$\lambda/\sigma=1$]{\includegraphics[width=0.75\textwidth]{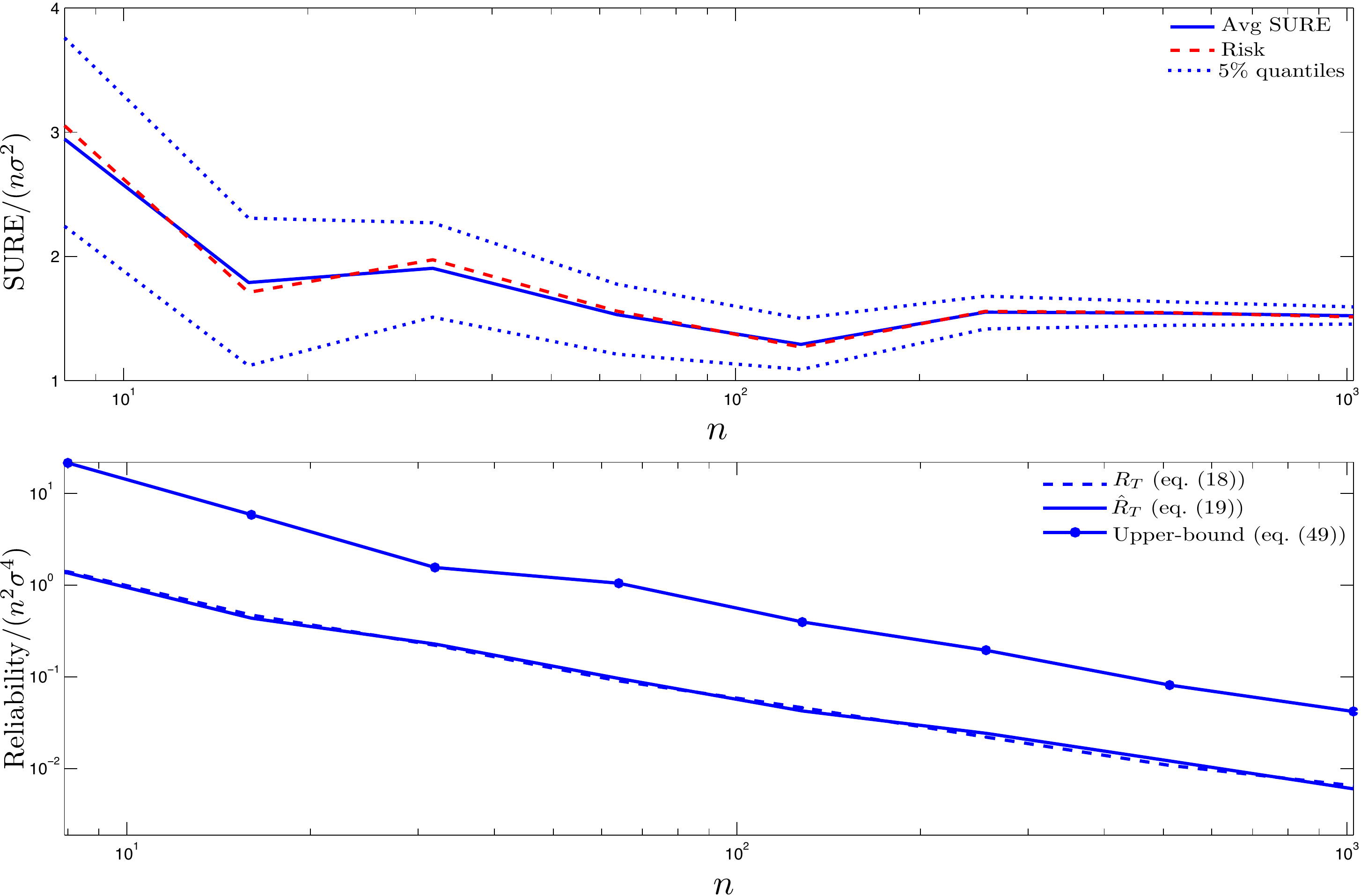}}\\
\subfigure[$\lambda/\sigma=10$]{\includegraphics[width=0.75\textwidth]{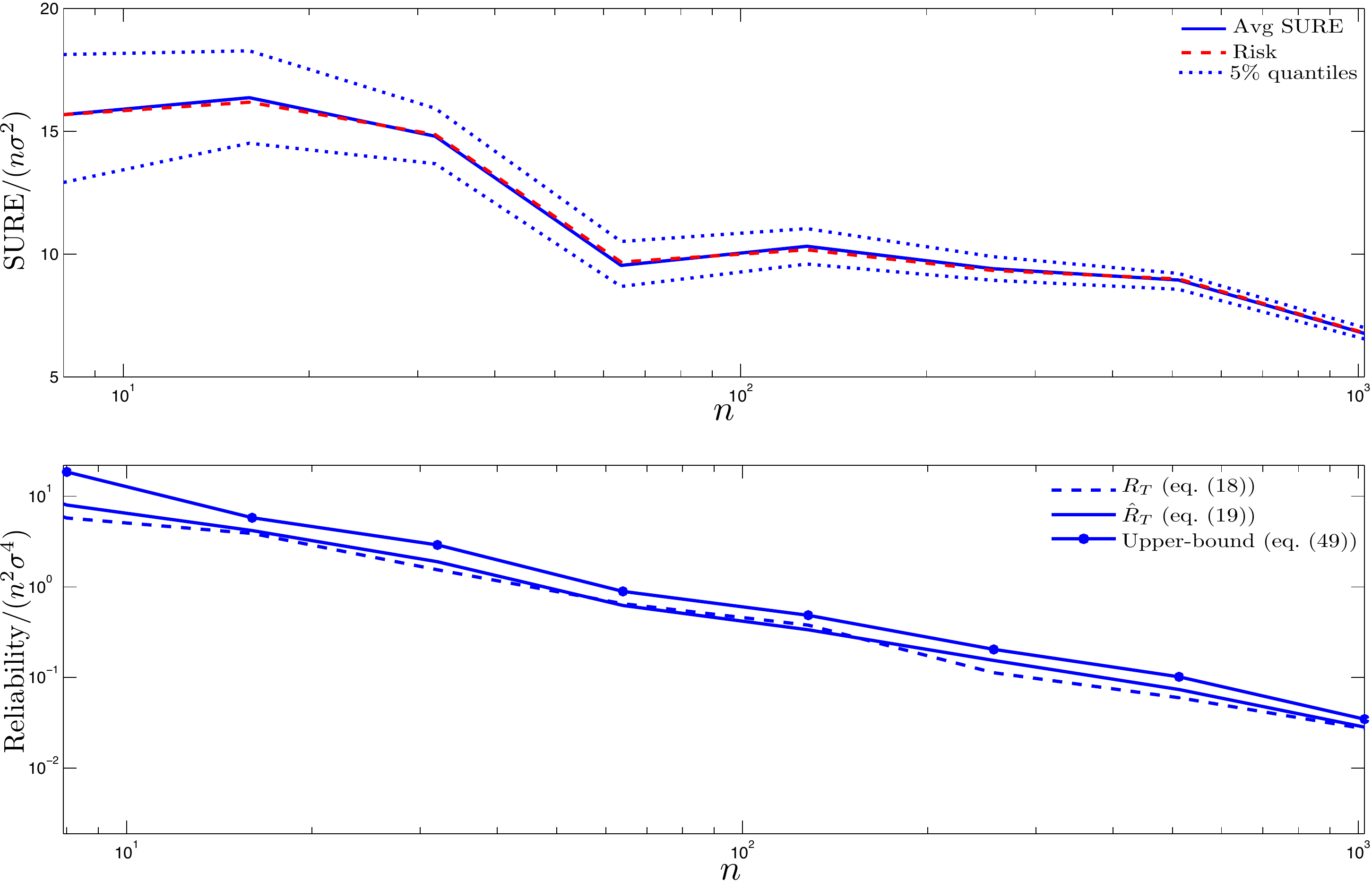}} \\
\end{centering}
\caption{The $\sure$ and its reliability as a function of the number of observations $n$.}
\label{fig3}
\end{figure}

\section{Numerical experiments}\label{simulation}

\paragraph{Experiments description} 
\added{In this section, we support the validity of our main theoretical findings with some numerical simulations, by checking the unbiasedness and the reliability of the $\sure$ for the Lasso. Here is the outline of these experiments.}

For our first study, \added{we consider two kinds of design matrices $A$, a random Gaussian matrix with $n = 256$ and $p = 1024$ whose entries are $\sim_{\mathrm{iid}}\mathcal{N}(0,1/n)$, and a deterministic convolution design matrix $A$ with $n = p = 256$ and a Gaussian blurring function.} The original sparse vector $x^0$ was drawn randomly according to a mixed Gaussian-Bernoulli distribution, such that $x^0$ is $15$-sparse (i.e. $|\supp(x^0)=15|$). 
For each design matrix $A$ and vector $x^0$, we generate $K = 100$ independent replications $y^k \in \R^n$ of the observation vector according to the linear regression model \eqref{eq0}. Then, for each $y^k$ and a given $\lambda$, we compute the Lasso response $\widehat{\mu}_{\lambda}(y^k)$ using the now popular iterative soft-thresholding algorithm \added{\cite{daubechies}}\footnote{Iterative soft-thresholding through block-coordinate relaxation was proposed in \cite{sardy} for matrices $A$ structured as the union of a finite number of orthonormal matrices.}, and we compute $\sure(\widehat{\mu}_{\lambda}(y^k))$ and $\se(\widehat{\mu}_{\lambda}(y^k))$. We then compute the empirical mean and the standard deviation of $\parenth{\sure(\widehat{\mu}_{\lambda}(y^k))}_{1 \leq k \leq K}$, the empirical mean of $\parenth{\se(\widehat{\mu}_{\lambda}(y^k))}_{1 \leq k \leq K}$, which corresponds to the computed prediction risk, and we compute $R_{T}$ the empirical normalized reliability on the left-hand side of \eqref{rh},
\begin{equation}\label{rt}
R_{T} = \dfrac{1}{K} \sum_{k = 1}^{K} \left( \dfrac{\sure(\widehat{\mu}_{\lambda}(y^k)) - \se(\widehat{\mu}_{\lambda}(y^k))}{n \sigma^{2}} \right)^{2}.
\end{equation}
Moreover, based on the right-hand side of \eqref{rh}, we compute $\widehat{R}_{T}$ as
\begin{equation}\label{ert}
\widehat{R}_{T} = - \dfrac{2}{n} + \dfrac{4}{n^{2} \sigma^{2}} \left( \dfrac{1}{K} \sum_{k = 1}^{K} \left( \Vert \widehat{\mu}_{\lambda}(y^k) - y^{k} \Vert_{2}^{2} \right) \right) + \dfrac{4}{n^2} \left( \dfrac{1}{K} \sum_{k = 1}^{K} \left( \vert I^{*} \vert_{k} \right) \right),
\end{equation}
where at the $k$th replication, $\vert I^{*} \vert_{k}$ is the cardinality of the support of a Lasso solution whose active matrix is full column rank as stated in Theorem~\ref{theo-general}. Finally, we repeat all these computations for various values of $\lambda$, for the \added{two} kinds of design matrices considered above.\\

\paragraph{Construction of full rank active matrix} 
As stated in the discussion just after Theorem~\ref{theo-general}, in situations where the Lasso problem has non-unique solutions, and the minimization algorithm returns a solution whose active matrix is rank deficient, one can construct an alternative optimal solution whose active matrix is full column rank, and then get the estimator of the degrees of freedom. 

More precisely, let $\widehat{x}_\lambda(y)$ be a solution of the Lasso problem with support $I$ such that its active matrix $A_I$ has a non-trivial kernel. The construction is as follows:
\begin{enumerate}
\item Take $h \in \ker{A_I}$ such that $\supp{h} \subset I$. 
\item For $t \in \R$, $A \widehat{x}_\lambda(y)= A\parenth{\widehat{x}_\lambda(y)+th}$ and the mapping $t \mapsto \|\widehat{x}_\lambda(y)+th\|_1$ is locally affine in a neighborhood of 0, i.e. for $|t| < \min_{j \in I} |(\widehat{x}_\lambda(y))_j|/\|h\|_\infty$. $\widehat{x}_\lambda(y)$ being a minimizer of \eqref{eq1}, this mapping is constant in a neighborhood of 0. We have then constructed a whole collection of solutions to \eqref{eq1} having the same image and the same $\ell_1$ norm, which lives on a segment. 
\item Move along $h$ with the largest step $t_0 > 0$ until an entry of $\widehat{x}^1_\lambda(y) = \widehat{x}_\lambda(y) + t_0 h$ vanishes, i.e. $\supp(\widehat{x}^1_\lambda(y) + t_0 h) \subsetneq I$. 
\item Repeat this process until getting a vector $x^*_\lambda(y)$ with a full column rank active matrix $A_{I^*}$. 
\end{enumerate}
Note that this construction bears similarities with the one in \cite{rosset04}.\\

\paragraph{Results discussion} 
\added{Figure~\ref{fig1} depicts the obtained results. For each design matrix, we associate a panel, each containing three plots. Hence, for each case, from left to right, the first plot represents the $\sure$ for one realization of the noise as a function of $\lambda$. In the second graph, we plot the computed prediction risk curve and the empirical mean of the $\sure$ as a function of the regularization parameter $\lambda$. Namely, the dashed curve represents the calculated prediction risk, the solid curve represents the empirical mean of the $\sure$, and the shaded area represent the empirical mean of the sure $\pm$ the empirical standard deviation of the $\sure$. The latter shows that the $\sure$ is an unbiased estimator of the prediction risk with a controlled variance. This suggests that the $\sure$ is consistent, and then so is our estimator of the degrees of freedom. In the third graph, we plot the theoretical and empirical normalized reliability, defined respectively by \eqref{rt} and \eqref{ert}, as a function of the regularization parameter $\lambda$. More precisely, the solid and dashed blue curves represent respectively $R_{T}$ and $\widehat{R}_{T}$. This confirms numerically that both sides ($R_{T}$ and $\widehat{R}_{T}$) of \eqref{rh} indeed coincide.} 

As discussed in the introduction, one of the motivations of having an unbiased estimator of the degrees of freedom of the Lasso is to provide a data-driven objective way for selecting the optimal Lasso regularization parameter $\lambda$. For this, one can compute the optimal $\lambda$ that minimizes the $\sure$, i.e.
\begin{equation}
\lambda_{{\mbox{\tiny optimal}}} = \argmin_{\lambda > 0} ~ \sure (\widehat{\mu}_{\lambda}(y)).
\end{equation}
\added{In practice, this optimal value can be found either by a exhaustive search over a fine grid, or alternatively by any dicothomic search algorithm (e.g. golden section) if $\lambda \mapsto \sure (\widehat{\mu}_{\lambda}(y))$ is unimodal.}\\


\added{Now, for our second simulation study, we consider a partial Fourier design matrix, with $n < p$ and a constant underdeterminacy factor $p/n=4$. $x^0$ was again simulated according to a mixed Gaussian-Bernoulli distribution with $\lceil 0.1 p\rceil$ non-zero entries. For each of three values of $\lambda/\sigma \in \{0.1,1,10\}$ (small, medium and large), we compute the prediction risk curve, the empirical mean of the $\sure$, as well as the values of the normalized reliability $R_{T}$ and $\widehat{R}_{T}$, as a function of $n \in \{8,\cdots,1024\}$. The obtained results are shown in Figure~\ref{fig3}. For each value of $\lambda$, the first plot (top panel) displays the normalized empirical mean of the $\sure$ (solid line) and its $5\%$ quantiles (dotted) as well as the computed normalized prediction risk (dashed). Unbiasedness is again clear whatever the value of $\lambda$. The trend on the prediction risk (and average $\sure$) is in agreement with rates known for the Lasso, see e.g. \cite{bickel09}. The second plot confirms that the $\sure$ is an asymptotically reliable estimate of the prediction risk with the rate established in Theorem~\ref{th2}. Moreover, as expected, the actual reliability gets closer to the upper-bound \eqref{bs} as the number of samples $n$ increases.}

\section{Proofs}\label{preuves}
\added{First of all, we recall some classical properties of any solution of the Lasso (see, e.g., \cite{osborne1,efronn,fuchs,tropp}). To lighten the notation in the two following lemmas, we will drop the dependency of the minimizers of \eqref{eq1} on either $\lambda$ or $y$.
\begin{lem}\label{lem1} $\widehat{x}$ is a (global) minimizer of the Lasso problem \eqref{eq1} if and only of:
\begin{enumerate}
\item $A_{I}^{\Tr} ( y - A \widehat{x} ) = \lambda \sign ( \widehat{x}_{I} ) $, where $I = \lbrace i : \widehat{x}_{i} \neq 0 \rbrace$, and\label{condition1-1}
\item $\vert \langle a_{j}, y - A \widehat{x} \rangle \vert \leq \lambda$, $\forall~ j \in I^{c}$,\label{condition1-2}
\end{enumerate}
where $I^{c} = \{1,\ldots,p\} \setminus I$. Moreover, if $A_{I}$ is full column rank, then $\widehat{x}$ satisfies the following implicit relationship:
\begin{equation}\label{eq11}
\widehat{x}_{I} = A_{I}^{+} y - \lambda ( A_{I}^{\Tr} A_{I} )^{- 1} \sign ( \widehat{x}_{I} ) ~.
\end{equation}
\end{lem}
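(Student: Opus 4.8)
The plan is to invoke first-order optimality for the convex objective $f(x) = \tfrac12\|y-Ax\|_2^2 + \lambda\|x\|_1$ and then specialize. Since $f$ is the sum of a smooth convex function and the finite-valued (hence everywhere-subdifferentiable) convex function $\lambda\|\cdot\|_1$, Fermat's rule together with the Moreau--Rockafellar sum rule shows that $\widehat{x}$ is a global minimizer of \eqref{eq1} if and only if $0 \in \partial f(\widehat{x}) = -A^{\Tr}(y - A\widehat{x}) + \lambda\,\partial\|\widehat{x}\|_1$; no constraint qualification is needed because the quadratic term is finite everywhere, and the condition is both necessary and sufficient precisely because $f$ is convex. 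Equivalently, there exists $v \in \partial\|\widehat{x}\|_1$ with $A^{\Tr}(y-A\widehat{x}) = \lambda v$.

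Next I would recall the explicit description of the subdifferential of the $\ell_1$ norm: $v \in \partial\|\widehat{x}\|_1$ if and only if $v_i = \sign(\widehat{x}_i)$ for every $i \in I := \supp(\widehat{x})$ and $|v_j| \le 1$ for every $j \in I^c$. Splitting the vector identity $A^{\Tr}(y - A\widehat{x}) = \lambda v$ into its restriction to the rows indexed by $I$ and those indexed by $I^c$ then yields exactly conditions~\ref{condition1-1} and~\ref{condition1-2}: the $I$-block reads $A_I^{\Tr}(y - A\widehat{x}) = \lambda\sign(\widehat{x}_I)$, while the $I^c$-block combined with $|v_j|\le 1$ reads $|\langle a_j, y - A\widehat{x}\rangle| \le \lambda$ for all $j\in I^c$. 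For the converse direction, given a vector satisfying~\ref{condition1-1}--\ref{condition1-2} one reconstructs an admissible $v$ (take $v_j = \langle a_j, y-A\widehat{x}\rangle/\lambda$ on $I^c$ and $v_i = \sign(\widehat{x}_i)$ on $I$), which re-establishes $0 \in \partial f(\widehat{x})$ and hence global optimality.

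Finally, for the implicit relation \eqref{eq11}, I would use that $\widehat{x}$ is supported on $I$, so $A\widehat{x} = A_I\widehat{x}_I$, and rewrite condition~\ref{condition1-1} as $A_I^{\Tr}A_I\,\widehat{x}_I = A_I^{\Tr}y - \lambda\sign(\widehat{x}_I)$. When $A_I$ has full column rank, $A_I^{\Tr}A_I$ is invertible; left-multiplying by $(A_I^{\Tr}A_I)^{-1}$ and recognizing $A_I^+ = (A_I^{\Tr}A_I)^{-1}A_I^{\Tr}$ gives \eqref{eq11}. There is no genuine obstacle in this argument; the only points deserving care are the justification that Fermat's rule applies as an equivalence here (convexity of $f$, finiteness of both summands) and the clean bookkeeping when partitioning the optimality identity along $I$ and $I^c$.
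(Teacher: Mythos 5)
Your proof is correct and is the standard subdifferential/Fermat's-rule derivation; the paper itself does not prove Lemma~\ref{lem1} but merely recalls it as classical, citing \cite{osborne1,efronn,fuchs,tropp}, and your argument is precisely the one found in those references. The only point worth making explicit is that since every entry of $\widehat{x}_I$ is nonzero, $\sign(\widehat{x}_I)\in\{-1,1\}^{|I|}$, so the paper's convention $\sign(0)=0$ causes no ambiguity in condition~\ref{condition1-1}.
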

Note that if the inequality in condition~\ref{condition1-2} above is strict, then $\widehat{x}$ is the unique minimizer of the Lasso problem \eqref{eq1} \cite{fuchs}. \\

Lemma~\ref{lem3} below shows that all solutions of \eqref{eq1} have the same image by $A$. In other words, the Lasso response $\widehat{\mu}_{\lambda}(y)$, is unique, see \cite{dossal}.
\begin{lem}\label{lem3}
If $\widehat{x}^{1}$ and $\widehat{x}^{2}$ are two solutions of \eqref{eq1}, then 
\[
A \widehat{x}^{1} = A \widehat{x}^{2} = \widehat{\mu}_{\lambda}(y).
\]
\end{lem}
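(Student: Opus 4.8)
The plan is to exploit the strict convexity of the data-fidelity term when viewed as a function of the image $Ax$, even though this term need not be strictly convex in $x$ itself (it fails to be so precisely when $A$ has a nontrivial kernel, i.e. in the underdetermined regime). Accordingly, I would write the Lasso objective as $f(x) = g(Ax) + \lambda \Vert x \Vert_1$ where $g(u) = \tfrac12 \Vert y - u \Vert_2^2$ is strictly convex on $\R^n$, and reason with the midpoint of two minimizers.

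Concretely, let $c^\star$ be the (finite) optimal value of \eqref{eq1}, so that $f(\widehat{x}^1) = f(\widehat{x}^2) = c^\star$, and set $\bar x = \tfrac12(\widehat{x}^1 + \widehat{x}^2)$. By convexity of $f$ one has $f(\bar x) \le c^\star$, hence $\bar x$ is itself a minimizer and $f(\bar x) = c^\star$. To extract information from this, I would quantify the slack in Jensen's inequality via the elementary identity
\[
g\parenth{\tfrac{u_1 + u_2}{2}} = \tfrac12 g(u_1) + \tfrac12 g(u_2) - \tfrac18 \Vert u_1 - u_2 \Vert_2^2
\]
applied to $u_i = A\widehat{x}^i$, combined with the triangle inequality $\Vert \bar x \Vert_1 \le \tfrac12 \Vert \widehat{x}^1 \Vert_1 + \tfrac12 \Vert \widehat{x}^2 \Vert_1$. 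This yields
\[
c^\star = f(\bar x) \le \tfrac12 f(\widehat{x}^1) + \tfrac12 f(\widehat{x}^2) - \tfrac18 \Vert A\widehat{x}^1 - A\widehat{x}^2 \Vert_2^2 = c^\star - \tfrac18 \Vert A\widehat{x}^1 - A\widehat{x}^2 \Vert_2^2,
\]
which forces $\Vert A\widehat{x}^1 - A\widehat{x}^2 \Vert_2^2 \le 0$, hence $A\widehat{x}^1 = A\widehat{x}^2$. Denoting this common value $\widehat{\mu}_\lambda(y)$ then makes the Lasso response well defined independently of the chosen minimizer, as claimed.

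There is essentially no substantive obstacle in this argument; the only point requiring care is to avoid invoking strict convexity of $f$ in the variable $x$ (which is false as soon as $\ker A \neq \{0\}$) and instead to work consistently through the composition $x \mapsto Ax$. An alternative derivation could be based on the optimality conditions of Lemma~\ref{lem1} (two minimizers have matching subgradients of the $\ell_1$ term against $A^\Tr(y - A\widehat{x})$), but this is more cumbersome, so I would retain the direct midpoint/convexity argument above.
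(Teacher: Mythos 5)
Your proof is correct. The key identity $g\bigl(\tfrac{u_1+u_2}{2}\bigr)=\tfrac12 g(u_1)+\tfrac12 g(u_2)-\tfrac18\Vert u_1-u_2\Vert_2^2$ for $g(u)=\tfrac12\Vert y-u\Vert_2^2$ is just the parallelogram law and checks out, and combining it with the triangle inequality for $\Vert\cdot\Vert_1$ and the optimality of the midpoint does force $A\widehat{x}^{1}=A\widehat{x}^{2}$. Note that the paper does not actually prove Lemma~\ref{lem3}: it recalls it as a known fact and cites \cite{dossal}; your midpoint/strict-convexity-in-the-image argument is the standard proof one finds in that literature, so you have supplied a complete, self-contained justification for a step the paper delegates to a reference.
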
}

Before delving into the technical details, we recall the following trace formula of the divergence. Let $J_{\widehat{\mu}(y)}$ be the Jacobian matrix of a mapping $y \mapsto \widehat{\mu}(y)$, defined as follows
\begin{equation}\label{derive}
\left( J_{\widehat{\mu}(y)} \right)_{i,j} := \dfrac{\partial \widehat{\mu}(y)_{i}}{\partial y_{j}}, \quad \quad i, j = 1, \cdots, n.
\end{equation}
Then we can write
\begin{equation}\label{divv}
\divg \left( \widehat{\mu}(y) \right) = \tr \left( J_{\widehat{\mu}(y)} \right) .
\end{equation}

\begin{proof}[Proof of Theorem~\ref{theo-general}]
Let $x^{*}_{\lambda}(y)$ be a solution of the Lasso problem \eqref{eq1} and $I^{*}$ its support such that $A_{I^*}$ is full column rank. Let $( x^{*}_{\lambda} (y))_{I^{*}}$ be the restriction of $x^{*}_{\lambda}(y)$ to its support and $S^{*} = \sign \left( ( x^{*}_{\lambda}(y) )_{I^{*}} \right)$. From Lemma~\ref{lem3} we have,
\[
\widehat{\mu}_{\lambda}(y) = A x^{*}_{\lambda}(y) = A_{I^{*}} ( x^{*}_{\lambda}(y) )_{I^{*}}.
\]
According to Lemma~\ref{lem1}, we know that
\begin{eqnarray*}
&& A_{I^{*}}^{\Tr} ( y - \widehat{\mu}_{\lambda}(y) ) = \lambda S^{*};\\
&& \vert \langle a_{k}, y - \widehat{\mu}_{\lambda}(y) \rangle \vert \leq \lambda, \forall~ k \in (I^{*})^{c}.
\end{eqnarray*}
Furthermore, from \eqref{eq11}, we get the following implicit form of $x^{*}_{\lambda}(y)$
\begin{equation}
( x^{*}_{\lambda}(y) )_{I^{*}} = A_{I^{*}}^{+} y - \lambda ( A_{I^{*}}^{\Tr} A_{I^{*}} )^{-1} S^*.
\end{equation}
It follows that
\begin{equation}\label{y}
\widehat{\mu}_{\lambda}(y) = P_{V_{I^{*}}} (y) - \lambda d_{I^{*},S^{*}},
\end{equation}
and
\begin{equation}\label{r}
\widehat{r}_{\lambda} (y) = y - \widehat{\mu}_{\lambda}(y) = P_{V_{I^{*}}^{\perp}} (y) + \lambda d_{I^{*},S^{*}} ~,
\end{equation}
where $d_{I^{*},S^{*}} = (A_{I^{*}}^{+})^{\Tr} S^{*}$.
We define the following set of indices
\begin{equation}\label{j}
J = \lbrace  j : \vert \langle a_{j}, \widehat{r}_{\lambda} (y)  \rangle \vert = \lambda \rbrace.
\end{equation}
From Lemma~\ref{lem1} we deduce that 
\[
I^{*} \subset J.
\]
Since the orthogonal projection is a self-adjoint operator and from \eqref{r}, for all $j \in J$, we have
\begin{equation}\label{c}
\vert \langle P_{V_{I^{*}}^{\perp}} (a_{j}), y \rangle + \lambda \langle a_{j}, d_{I^{*},S^{*}} \rangle  \vert = \lambda.
\end{equation}
As $y \in G_{\lambda}$, we deduce that if $j \in J \cap (I^{*})^{c}$ then inevitably we have
\begin{equation}\label{f-r}
a_{j} \in V_{I^{*}},~ \mbox{and therefore}~ \vert \langle a_{j}, d_{I^{*},S^{*}} \rangle \vert = 1.
\end{equation}
In fact, if $a_j \not \in V_{I^{*}}$ then $( I^{*},j, S^{*} ) \in \Omega$ and from \eqref{c} we have that $y \in H_{I^{*},j,S^{*}}$, which is a contradiction with $y \in G_{\lambda}$.\\
Therefore, the collection of vectors $(a_{i})_{i \in I^{*}}$ forms a basis of $V_{J} = \span (a_{j})_{j \in J}$. Now, suppose that $\widehat{x}_{\lambda}(y)$ is another solution of \eqref{eq1}, such that its support $I$ is different from $I^{*}$. If $A_{I}$ is full column rank, then by using the same above arguments we can deduce that $(a_{i})_{i \in I}$ forms also a basis of $V_{J}$. Therefore, we have
\[
\vert I \vert = \vert I^{*} \vert = \dim (V_{J}).
\]
On the other hand, if $A_{I}$ is not full rank, then there exists a subset $I_{0} \subsetneq I$ such that $A_{I_{0}}$ is full rank (see the discussion following Theorem~\ref{theo-general}) and $(a_{i})_{i \in I_{0}}$ forms also a basis of $V_{J}$, which implies that
\[
\vert I \vert > \vert I_0\vert=\dim (V_{J}) = \vert I^{*} \vert.
\]
We conclude that for any solution $\widehat{x}_\lambda(y)$ of \eqref{eq1}, we have
\[
\vert \supp ( \widehat{x}_\lambda(y) ) \vert \geq \vert I^{*} \vert,
\]
and then $\vert I^{*} \vert$ is equal to the minimum of the cardinalities of the supports of solutions of \eqref{eq1}. This proves the first part of the theorem.\\

Let's turn to the second statement. Note that $G_{\lambda}$ is an open set and all components of $ (
x^{*}_{\lambda}(y) )_{I^{*}} $ are nonzero, so we can choose a small
enough $\varepsilon$ such that $\ball (y, \varepsilon) \subsetneq
G_{\lambda}$, that is, for all $z \in \ball (y, \varepsilon)$, $z\in G_{\lambda}$.
Now, let $x^{1}_{\lambda}(z)$ be the vector supported in $I^{*}$ and defined by
\begin{equation}\label{autre}
( x^{1}_{\lambda}(z) )_{I^{*}} = A_{I^{*}}^{+} z - \lambda
(A_{I^*}^\Tr A_{I^*})^{-1}S^{*}
=(x^*_{\lambda}(y))_{I^*}+A_{I^{*}}^{+}(z-y).
\end{equation}
If $\varepsilon$ is small enough, then for all  $z \in \ball (y,
\varepsilon)$, we have
\begin{equation}
\sign( x^{1}_{\lambda}(z) )_{I^{*}} = \sign ( x^{*}_{\lambda}(y)
)_{I^{*}}= S^*.
\end{equation}

In the rest of the proof, we invoke Lemma~\ref{lem1} to show that, for $\varepsilon$ small enough, 
$x^1_{\lambda}(z)$ is actually a solution of $(\mbox{P}_{1} ( z, \lambda ))$. 
First we notice that $z-Ax^1_{\lambda}(z)=P_{V_I^{\perp}}(z)+\lambda
d_{I^*,S^*}$.
It follows that 
\begin{equation}
A_{I^*}^\Tr(z-Ax^1_{\lambda}(z))=\lambda A_{I^*}^\Tr d_{I^*,S^*}=\lambda
S^*=\lambda \sign{( x^{1}_{\lambda}(z) )_{I^{*}}}.
\end{equation}
Moreover for all $j\in J\cap I^*$, from \eqref{f-r}, we have that
\begin{eqnarray*}
\vert \langle a_{j}, z - A x^{1}_{\lambda}(z) \rangle \vert &=& \vert \langle a_{j}, P_{V_{I^{*}}^{\perp}} (z) + \lambda d_{I^{*},S^{*}} \rangle \vert\\ &=& \vert \langle P_{V_{I^{*}}^{\perp}} (a_{j}), z \rangle + \lambda \langle a_{j}, d_{I^{*},S^{*}} \rangle \vert\\ &=& \lambda \vert \langle a_{j}, d_{I^{*},S^{*}} \rangle \vert = \lambda.
\end{eqnarray*}
and for all $j\notin J$
\begin{equation*}
\vert \langle a_j,z-Ax^1_{\lambda}(z)\rangle\vert\leq 
\vert \langle a_j,y-Ax^*_{\lambda}(y)\rangle\vert+\vert \langle P_{V_{I^{*}}^{\perp}} (a_{j}),z-y\rangle\vert
\end{equation*}
Since for all $j\notin J$, $\vert \langle
a_j,y-Ax^*_{\lambda}\rangle\vert<\lambda$, there exists $\varepsilon$
such that for all $z\in \ball (y, \varepsilon)$ and $\forall~ j\notin J$, we have
\[
\vert \langle a_j,z-Ax^1_{\lambda}(z)\rangle\vert<\lambda.
\]
Therefore, we obtain
\[
\vert \langle a_{j}, z - A x^{1}_{\lambda}(z) \rangle \vert \leq \lambda, \forall~ j \in (I^{*})^{c}.
\]
Which, by Lemma~\ref{lem1}, means that $x^{1}_{\lambda}(z)$ is a solution of $(\mbox{P}_{1} ( z, \lambda ))$, and the unique  Lasso response associated to $(\mbox{P}_{1} ( z, \lambda ))$, denoted by $\widehat{\mu}_{\lambda}(z)$, is defined by
\begin{equation}\label{z}
\widehat{\mu}_{\lambda}(z) = P_{V_{I^{*}}} (z) - \lambda d_{I^{*},S^{*}}.
\end{equation}
Therefore, from \eqref{y} and \eqref{z}, we can deduce that for all $z \in \ball (y, \varepsilon)$ we have
\[
\widehat{\mu}_{\lambda}(z) = \widehat{\mu}_{\lambda}(y) + P_{V_{I^{*}}} (z - y).
\]
\end{proof}
\begin{proof}[Proof of Corollary~\ref{Corollary1}]
We showed that there exists $\varepsilon$ sufficiently small such that 
\begin{equation}
\Vert z - y \Vert_{2} \leq \varepsilon \Rightarrow \widehat{\mu}_{\lambda}(z) = \widehat{\mu}_{\lambda}(y) + P_{V_{I^{*}}} (z - y).
\end{equation}
Let $h \in V_{I^{*}}$ such that $\Vert h \Vert_{2} \leq \varepsilon$ and $z = y + h$. Thus, we have that $\Vert z - y \Vert_{2} \leq \varepsilon$ and then
\begin{equation}
\Vert \widehat{\mu}_{\lambda}(z) - \widehat{\mu}_{\lambda}(y) \Vert_{2} = \Vert P_{V_{I^{*}}} (h) \Vert_{2} = \Vert h \Vert_{2} \leq \varepsilon.
\end{equation}
Therefore, the Lasso response $\widehat{\mu}_{\lambda}(y)$ is uniformly Lipschitz on $G_{\lambda}$. Moreover, $\widehat{\mu}_{\lambda}(y)$ is a continuous function of $y$, and thus $\widehat{\mu}_{\lambda}(y)$ is
uniformly Lipschitz on $\R^n$. Hence, $\widehat{\mu}_{\lambda}(y)$ is almost differentiable; see \cite{meyer} and \cite{efronn}.

On the other hand, we proved that there exists a neighborhood of $y$, such that for all $z$ in this neighborhood, there exists a solution of the Lasso problem $(\mbox{P}_{1} ( z, \lambda ))$, which has the same support and the same sign as $x^{*}_{\lambda}(y)$, and thus $\widehat{\mu}_{\lambda}(z)$ belongs to the vector space $V_{I^{*}}$, whose dimension equals to $\vert I^{*} \vert$, see \eqref{y} and \eqref{z}. Therefore, $\widehat{\mu}_{\lambda}(y)$ is a locally affine function of $y$, and then
\begin{equation}
J_{\widehat{\mu}_{\lambda}(y)} = P_{V_{I^{*}}} ~.
\end{equation}
Then the trace formula \eqref{divv} implies that
\begin{equation}\label{divvv}
\divg \left( \widehat{\mu}_{\lambda}(y) \right) = \tr \left( P_{V_{I^{*}}} \right) = \vert I^{*} \vert.
\end{equation}
This holds almost everywhere since $G_{\lambda}$ is of full measure, and \eqref{dfff} is obtained by invoking Stein's lemma.
\end{proof}

\begin{proof}[Proof of Theorem~\ref{th2}]
First, consider the following random variable
\[
Q_{1} ( \widehat{\mu}_{\lambda}(y)) = \Vert \widehat{\mu}_{\lambda}(y) \Vert_{2}^{2} + \Vert \mu \Vert_{2}^{2} - 2 \langle y, \widehat{\mu}_{\lambda}(y) \rangle + 2 \sigma^{2} \divg (\widehat{\mu}_{\lambda}(y)).
\]
From Stein's lemma, we have 
\[
\E \langle \varepsilon, \widehat{\mu}_{\lambda}(y) \rangle = \sigma^{2} \E \parenth{\divg (\widehat{\mu}_{\lambda}(y))}.
\]
Thus, we can deduce that $Q_{1} ( \widehat{\mu}_{\lambda}(y))$ and $\sure ( \widehat{\mu}_{\lambda}(y))$ 
are unbiased estimator of the prediction risk, i.e.
\[
\E \parenth{\sure ( \widehat{\mu}_{\lambda}(y))} = \E \parenth{Q_{1} ( \widehat{\mu}_{\lambda}(y))} = \E \parenth{\se(\widehat{\mu}_{\lambda}(y))} = \mse (\mu).
\]
Moreover, note that $\sure ( \widehat{\mu}_{\lambda}(y)) - Q_{1} ( \widehat{\mu}_{\lambda}(y)) = \Vert y \Vert_{2}^{2} - \E \parenth{ \Vert y \Vert_{2}^{2} }$, where
\begin{equation}\label{p0}
\E \left( \Vert y \Vert_{2}^{2} \right) = n \sigma^{2} + \Vert \mu \Vert_{2}^{2},~\mbox{and}~ \V \parenth{ \Vert y \Vert_{2}^{2} } = 2 \sigma^{4} \left( n + 2 \dfrac{\Vert \mu \Vert_{2}^{2}}{\sigma^{2}} \right).
\end{equation}
Now, we remark also that
\begin{eqnarray}
Q_{1} ( \widehat{\mu}_{\lambda}(y)) - \se(\widehat{\mu}_{\lambda}(y)) = 2 \left( \sigma^{2} \divg (\widehat{\mu}_{\lambda}(y)) - \langle \varepsilon, \widehat{\mu}_{\lambda}(y)\rangle  \right).
\end{eqnarray}
After an elementary calculation, we obtain
\begin{equation}\label{p1}
\E ( \sure ( \widehat{\mu}_{\lambda}(y)) - \se(\widehat{\mu}_{\lambda}(y)) )^{2} = \E ( Q_{1} ( \widehat{\mu}_{\lambda}(y)) - \se(\widehat{\mu}_{\lambda}(y)) )^{2} + \V \parenth{ \Vert y \Vert_{2}^{2} } + 4 T,
\end{equation}
where
\begin{equation}
T = \sigma^{2} \E \parenth{\divg (\widehat{\mu}_{\lambda}(y)) \Vert y \Vert_{2}^{2}} - \E \parenth{\langle \varepsilon, \widehat{\mu}_{\lambda}(y)\rangle \Vert y \Vert_{2}^{2}} = T_{1} + T_{2},
\end{equation}
with
\begin{equation}
T_{1} = 2 \parenth{ \sigma^{2} \E \parenth{\divg (\widehat{\mu}_{\lambda}(y)) \langle \varepsilon, \mu \rangle} - \E \parenth{\langle \varepsilon, \widehat{\mu}_{\lambda}(y)\rangle \langle \varepsilon, \mu \rangle} }
\end{equation}
and
\begin{equation}
T_{2} = \sigma^{2} \E \parenth{\divg ( \widehat{\mu}_{\lambda}(y)) \Vert \varepsilon \Vert_{2}^{2}} - \E \parenth{\langle \varepsilon, \widehat{\mu}_{\lambda}(y)\rangle \Vert \varepsilon \Vert_{2}^{2}}.
\end{equation}
Hence, by using the fact that a Gaussian probability density $\varphi ({\varepsilon_{i}})$ satisfies $\varepsilon_{i} \varphi ({\varepsilon_{i}}) = - \sigma^{2} \varphi^\prime ({\varepsilon_{i}})$ and integrations by parts, we find that
\[
T_{1} = -2 \sigma^{2} \E \parenth{\langle \widehat{\mu}_{\lambda}, \mu \rangle}
\]
and
\[
T_{2} = -2 \sigma^{4} \E \parenth{ \divg ( \widehat{\mu}_{\lambda}(y)) }.
\]
It follows that
\begin{equation}\label{p2}
T = - 2 \sigma^{2} \big( \E \parenth{\langle \widehat{\mu}_{\lambda}, \mu \rangle} + \sigma^{2} \E \parenth{\divg (\widehat{\mu}_{\lambda}(y))} \big).
\end{equation}
Moreover, from \cite[Property~1]{luisier}, we know that
\begin{equation}
\E ( Q_{1} ( \widehat{\mu}_{\lambda}(y)) - \se(\widehat{\mu}_{\lambda}(y)) )^{2} = 4 \sigma^{2} \bigg( \E \parenth{ \Vert \widehat{\mu}_{\lambda}(y)\Vert_{2}^{2} } + \sigma^{2} \E \parenth{ \tr \parenth{ \left( J_{\widehat{\mu}_{\lambda}(y)} \right)^{2} } } \bigg),
\end{equation}
Thus, since $J_{\widehat{\mu}_{\lambda}(y)} = P_{V_{I^{*}}}$ which is an orthogonal projector (hence self-adjoint and idempotent), we have $\tr \parenth{\left( J_{\widehat{\mu}_{\lambda}(y)} \right)^2} = \divg ( \widehat{\mu}_{\lambda}(y)) = \vert I^{*} \vert$. Therefore, we get
\begin{equation}\label{p3}
\E ( Q_{1} ( \widehat{\mu}_{\lambda}(y)) - \se(\widehat{\mu}_{\lambda}(y)) )^{2} = 4 \sigma^{2} \left( \E \left( \Vert \widehat{\mu}_{\lambda}(y)\Vert_{2}^{2} \right) + \sigma^{2} \E \left( \vert I^{*} \vert \right) \right).
\end{equation}
Furthermore, observe that
\begin{equation}\label{p4}
\E \parenth{\sure (\widehat{\mu}_{\lambda}(y))} = -n \sigma^{2} + \E \left( \Vert \widehat{\mu}_{\lambda}(y)- y \Vert_{2}^{2} \right) + 2 \sigma^{2} \E \left( \vert I^{*} \vert \right) . 
\end{equation}
Therefore, by combining \eqref{p0}, \eqref{p1}, \eqref{p2} and \eqref{p3}, we obtain
\begin{eqnarray*}
\E ( \sure ( \widehat{\mu}_{\lambda}(y)) - \se(\widehat{\mu}_{\lambda}(y)) )^{2} &=& 2 n \sigma^{4} + 4 \sigma^{2} \E \parenth{\se(\widehat{\mu}_{\lambda}(y))} - 4 \sigma^{4} \E \left( \vert I^{*} \vert \right)\\ &=&  2 n \sigma^{4} + 4 \sigma^{2} \E \left( \sure ( \widehat{\mu}_{\lambda}(y)) \right) - 4 \sigma^{4} \E \left( \vert I^{*} \vert \right)\\(\mbox{by using}~ \eqref{p4})&=& - 2 n \sigma^{4} + 4 \sigma^{2} \E \left( \Vert \widehat{\mu}_{\lambda}(y)- y \Vert_{2}^{2} \right) + 4 \sigma^{4} \E \left( \vert I^{*} \vert \right).
\end{eqnarray*}

On the other hand, since $x^{*}_{\lambda}(y)$ is a minimizer of the Lasso problem \eqref{eq1}, we observe that
\begin{eqnarray*}
\frac{1}{2} \Vert \widehat{\mu}_{\lambda}(y)- y \Vert_{2}^{2} \leq \frac{1}{2} \Vert \widehat{\mu}_{\lambda}(y)- y \Vert_{2}^{2} + \lambda \Vert x^{*}_{\lambda}(y) \Vert_{1} \leq \frac{1}{2} \Vert A . 0 - y \Vert_{2}^{2} + \lambda \Vert 0\Vert_{1} = \frac{1}{2} \Vert y \Vert_{2}^{2}.
\end{eqnarray*}
Therefore, we have
\begin{equation}\label{p5}
\E \left( \Vert \widehat{\mu}_{\lambda}(y)- y \Vert_{2}^{2} \right) \leq \E \left( \Vert y \Vert_{2}^{2} \right) =  n \sigma^{2} + \Vert \mu \Vert_{2}^{2}.
\end{equation}
Then, since \added{$\vert I^{*} \vert = \bigO{n}$} and from \eqref{p5}, we have
\begin{equation}\label{bs}
\E \left( \left( \dfrac{\sure ( \widehat{\mu}_{\lambda}(y)) - \se(\widehat{\mu}_{\lambda}(y))}{n \sigma^{2}} \right)^{2} \right) \leq \dfrac{6}{n} + \dfrac{4 \Vert \mu \Vert_{2}^{2}}{n^{2} \sigma^{2}}.
\end{equation}
Finally, since $\Vert \mu \Vert_{2} < +\infty$, we can deduce that
\[
\E \left( \left( \dfrac{\sure ( \widehat{\mu}_{\lambda}(y)) - \se(\widehat{\mu}_{\lambda}(y))}{n \sigma^{2}} \right)^{2} \right) = \bigO{\frac{1}{n}}.
\]
\end{proof}
\section{Discussion}\label{group}
In this paper we proved that the number of nonzero coefficients of a particular solution of the Lasso problem is an unbiased estimate of the degrees of freedom of the Lasso response for linear regression models. This result covers both the over and underdetermined cases. This was achieved through a divergence formula, valid almost everywhere except on a set of measure zero. We gave a precise characterization of this set, and the latter turns out to be larger than the set of all the vectors associated to the transition points considered in \cite{zou} in the overdetermined case. We also highlight the fact that even in the overdetermined case, the set of transition points is not sufficient for the divergence formula to hold.

We think that some techniques developed in this article can be applied to derive the degrees of freedom of other nonlinear estimating procedures. Typically, a natural extension of this work is to consider other penalties such as those promoting structured sparsity, e.g. the group Lasso.

\paragraph*{Acknowledgement}
This work was partly funded by the ANR grant NatImages, ANR-08-EMER-009.

\end{document}